\newtheorem{theorem}{Theorem}[section]
\newtheorem{lemma}[theorem]{Lemma}
\newtheorem{proposition}[theorem]{Proposition}
\newtheorem{definition}[theorem]{Definition}
\newtheorem{assumption}[theorem]{Assumption}
\newtheorem{example}[theorem]{Example}
\begin{document}
\setlength\arraycolsep{2pt}
\title[On a Calculable Projection Estimator of the Drift Function in Fractional SDE]{On a Calculable Skorokhod's Integral Based Projection Estimator of the Drift Function in Fractional SDE}
\author{Nicolas MARIE$^{\dag}$}
\address{$^{\dag}$Laboratoire Modal'X, Universit\'e Paris Nanterre, Nanterre, France}
\email{nmarie@parisnanterre.fr}
\keywords{Fractional Brownian motion; Projection estimator; Malliavin calculus; Stochastic differential equations}
\date{}
\maketitle
\noindent
%


%
\begin{abstract}
This paper deals with a Skorokhod's integral based projection type estimator $\widehat b_m$ of the drift function $b_0$ computed from $N\in\mathbb N^*$ independent copies $X^1,\dots,X^N$ of the solution $X$ of $dX_t = b_0(X_t)dt +\sigma dB_t$, where $B$ is a fractional Brownian motion of Hurst index $H\in (1/2,1)$. Skorokhod's integral based estimators cannot be calculated directly from $X^1,\dots,X^N$, but in this paper an $\mathbb L^2$-error bound is established on a calculable approximation of $\widehat b_m$.
\end{abstract}
\tableofcontents
%


%
\section{Introduction}\label{section_introduction}
Consider the differential equation
\begin{equation}\label{main_equation}
X_t = X_0 +\int_{0}^{t}b_0(X_s)ds +\sigma B_t
\textrm{ $;$ }t\in [0,T],
\end{equation}
where $T > 0$, $B = (B_t)_{t\in\mathbb R}$ is the two-sided fractional Brownian motion of Hurst index $H\in (1/2,1)$ constructed in Hairer and Ohashi \cite{HO07} (Section 4.1), $X_0$ is a square integrable and $\sigma((B_t)_{t\in\mathbb R_-})$-measurable random variable, $b_0\in C^1(\mathbb R)$ and $\sigma\in\mathbb R^*$. Moreover, $b_0'$ is bounded, and then Equation (\ref{main_equation}) has a unique solution $X$.
\\
\\
The oldest kind of estimators of the drift function $b_0$ is based on the long-time behavior of the solution of Equation (\ref{main_equation}). In the parametric estimation framework, the reader may refer to the monograph \cite{KMR17} written by K. Kubilius, Y. Mishura and K. Ralchenko, but also to Kleptsyna and Le Breton \cite{KL01}, Tudor and Viens \cite{TV07}, Hu and Nualart \cite{HN10}, Neuenkirch and Tindel \cite{NT14}, Hu et al. \cite{HNZ19}, Marie and Raynaud de Fitte \cite{MRF21}, etc. In the nonparametric estimation framework, the reader may refer to Saussereau \cite{SAUSSEREAU14} and Comte and Marie \cite{CM19}.
\\
\\
Recently, a new kind of estimators of $b_0$ have been investigated: those computed from $N$ independent copies $X^1,\dots,X^N$ of $X$ observed on $[0,T]$ with $T$ fixed but $N\rightarrow\infty$. The reader may refer to Marie \cite{MARIE23} in the parametric framework, but also to Comte and Marie \cite{CM21} dealing with an extension to Equation (\ref{main_equation}) of the projection least squares estimator computed from independent copies of a diffusion process in Comte and Genon-Catalot \cite{CGC20}.
\\
\\
Note that copies based estimators are well-adapted to some situations difficult to manage with long-time behavior based estimators. For instance, in pharmacokinetics, $X$ may model the elimination process of a drug administered to one people and then, in a clinical-trial involving $N$ patients, $X^i$ may model the elimination process of the same drug for the $i$-th patient.
\\
\\
The stochastic integral involved in the definition of the estimators studied in \cite{HN10}, \cite{HNZ19}, \cite{MRF21}, \cite{CM19}, \cite{CM21} and \cite{MARIE23} is taken in the sense of Skorokhod. To be not directly calculable from one observation of $X$ is the major drawback of the Skorokhod integral with respect to $X$. To bypass this difficulty by extending the fixed point strategy of Marie \cite{MARIE23} to the nonparametric estimation framework is the main purpose of this paper.
\\
\\
Consider the copies $X^1,\dots,X^N$ of $X$ such that
\begin{displaymath}
X^i :=\mathcal I_T(X_{0}^{i},B^i)
\textrm{ $;$ }
\forall i\in\{1,\dots,N\},
\end{displaymath}
where $\mathcal I_T$ is the solution map for Equation (\ref{main_equation}) and $B^1,\dots,B^N$ (resp. $X_{0}^{1},\dots,X_{0}^{N}$) are independent copies of $B$ (resp. $X_0$). Moreover, assume that the probability distribution of $X_t$, $t\in (0,T]$, has a density $f_t$ with respect to Lebesgue's measure such that $t\mapsto f_t(x)$ belongs to $\mathbb L^1([0,T])$ for every $x\in\mathbb R$. This legitimates to consider the density function $f$ defined by
\begin{displaymath}
f(x) :=\frac{1}{T}\int_{0}^{T}f_s(x)ds
\textrm{ $;$ }
\forall x\in\mathbb R.
\end{displaymath}
Consider the interval $I$ of $\mathbb R$ on which $x\mapsto b_0(x)$ will be estimated, and let $(\varphi_1,\dots,\varphi_m)$ be an $I$-supported orthonormal family of $\mathbb L^2(\mathbb R,dx)$. For instance, if $I$ is compact, one can take the trigonometric basis, and if $I =\mathbb R$, one can take the Hermite basis. A suitable nonparametric estimator of $b_0$ could be
\begin{displaymath}
\widehat b_m(x) :=
\frac{1}{f(x)}\sum_{j = 1}^{m}\left(\frac{1}{NT}
\sum_{i = 1}^{N}\int_{0}^{T}\varphi_j(X_{s}^{i})\delta X_{s}^{i}\right)\varphi_j(x)
\textrm{ $;$ }x\in I,
\end{displaymath}
but it is not directly calculable from $X^1,\dots,X^N$ because the stochastic integral is taken in the sense of Skorokhod. However, by assuming that $\varphi_1,\dots,\varphi_m$ are differentiable on $I$,
\begin{equation}\label{relation_pLS_drift}
\widehat b_m =\Phi_m(b_0)\approx\Phi_m(\widehat b_m)
\end{equation}
thanks to Nualart \cite{NUALART06}, Proposition 5.2.3, where $\Phi_m$ is the map defined on $C^1(\mathbb R)$ by
\begin{displaymath}
\Phi_m(\varphi)(x) :=
\frac{1}{f(x)}\sum_{j = 1}^{m}\left[
\frac{1}{NT}\sum_{i = 1}^{N}\left[I_{i,j} -
\mathfrak a\int_{0}^{T}\int_{0}^{t}
\varphi_j'(X_{t}^{i})\exp\left(\int_{s}^{t}\varphi'(X_{u}^{i})du\right)|t - s|^{2H - 2}dsdt\right]\right]\varphi_j(x)
\end{displaymath}
for every $\varphi\in C^1(\mathbb R)$ and $x\in\mathbb R$, $\mathfrak a :=\sigma^2H(2H - 1)$ and
\begin{displaymath}
I_{i,j} :=\int_{0}^{T}\varphi_j(X_{s}^{i})dX_{s}^{i}
\end{displaymath}
for any $i\in\{1,\dots,N\}$ and $j\in\{1,\dots,m\}$. In the definition of $I_{i,j}$, the stochastic integral is taken pathwise (in the sense of Young).
\\
\\
So, Section \ref{risk_bound_oracle_section} deals with an $\mathbb L^2$-error bound on the auxiliary (pseudo-)estimator $\widehat b_m$, and then Section \ref{risk_bound_section} with the existence, the uniqueness and an $\mathbb L^2$-error bound on the nonparametric estimator $\widetilde b_m$ of $b_0$ defined as the fixed point of $\Phi_m$ in
\begin{displaymath}
\mathcal S_{m,\mathfrak c} :=\{\varphi\in\mathcal S_m :
\|\varphi'\|_{\infty}\leqslant\mathfrak c\}
\quad {\rm with}\quad\mathfrak c >\|b_0'\|_{\infty}
\quad {\rm and}\quad
\mathcal S_m = {\rm span}\{\varphi_1,\dots,\varphi_m\}.
\end{displaymath}
For instance, if $X$ is the stationary solution of Equation (\ref{main_equation}), which exists and is unique under an appropriate dissipativity condition on $b_0$ (see (\ref{dissipativity_condition})), and if $(\varphi_1,\dots,\varphi_m)$ is the $I$-supported trigonometric basis with $I = [\ell,\texttt r]$ ($\ell,\texttt r\in\mathbb R$ such that $\ell <\texttt r$), then the main result of our paper (see Theorem \ref{risk_bound_fixed_point_estimator}) says that
\begin{equation}\label{risk_bound_trigonometric}
\mathbb E\left(\int_{-\infty}^{\infty}(\widetilde b_m(x)\mathbf 1_{\Delta_m} -
b_0(x))^2f_0(x)^2dx\right)
\lesssim
m^{-2} + m^3(N^{-\frac{1}{2}}T^{-1} + T^{2H - 1}),
\end{equation}
where $f_0$ is the (stationary) density of $X_0$ and $\Delta_m$ is an event defined later. By Inequality (\ref{risk_bound_trigonometric}), there is a double compromise to manage with. First, $T$ needs to be of order $N^{-1/(4H)}$, and then $m$ needs to be of order $N^{(2H - 1)/(20H)}$ to get a rate of order $N^{-(2H - 1)/(10H)}$.
\\
\\
Some preliminary results on Skorokhod's integral are given in Section \ref{Skorokhod_integral_section}.
%


%
\section{Basics on the Skorokhod integral}\label{Skorokhod_integral_section}
Let $\langle .,.\rangle_{\mathcal H}$ be the inner product defined by
\begin{displaymath}
\langle h,\eta\rangle_{\mathcal H} :=
\alpha_H
\int_{0}^{T}\int_{0}^{T}
h(s)\eta(t)|t - s|^{2H - 2}dsdt
\quad {\rm with}\quad
\alpha_H = H(2H - 1),
\end{displaymath}
and consider the reproducing kernel Hilbert space $\mathcal H =
\{h :\|h\|_{\mathcal H} <\infty\}$ of $B^T = (B_t)_{t\in [0,T]}$, where $\|.\|_{\mathcal H}$ is the norm associated to $\langle .,.\rangle_{\mathcal H}$. Consider also the isonormal Gaussian process $(\mathbf B(h))_{h\in\mathcal H}$ defined by
\begin{displaymath}
\mathbf B(h) :=
\int_{0}^{T}h(s)dB_s
\textrm{ $;$ }h\in\mathcal H,
\end{displaymath}
which is the Wiener integral of $h$ with respect to $B$ on $[0,T]$.
%


%
\begin{definition}\label{Malliavin_derivative}
The Malliavin derivative of a smooth functional
\begin{displaymath}
F =\varphi(
\mathbf B(h_1),\dots,
\mathbf B(h_n))
\end{displaymath}
where $n\in\mathbb N^*$, $\varphi\in C_{\rm p}^{\infty}(\mathbb R^n;\mathbb R)$ (the space of all the smooth functions $\varphi :\mathbb R^n\rightarrow\mathbb R$ such that $\varphi$ and all its partial derivatives have polynomial growth) and $h_1,\dots,h_n\in\mathcal H$, is the $\mathcal H$-valued random variable
\begin{displaymath}
\mathbf DF :=
\sum_{k = 1}^{n}
\partial_k\varphi(
\mathbf B(h_1),\dots,
\mathbf B(h_n))h_k.
\end{displaymath}
\end{definition}
%


%
\begin{proposition}\label{Malliavin_derivative_domain}
The map $\mathbf D$ is closable from $\mathbb L^2(\Omega;\mathbb R)$ into $\mathbb L^2(\Omega;\mathcal H)$. Its domain in $\mathbb L^2(\Omega;\mathbb R)$, denoted by $\mathbb D^{1,2}$, is the closure of the smooth functionals space for the norm $\|.\|_{1,2}$ defined by
\begin{displaymath}
\|F\|_{1,2}^{2} :=
\mathbb E(F^2) +
\mathbb E(\|\mathbf DF\|_{\mathcal H}^{2}).
\end{displaymath}
The Malliavin derivative of $F\in\mathbb D^{1,2}$ at time $s\in [0,T]$ is denoted by $\mathbf D_sF$.
\end{proposition}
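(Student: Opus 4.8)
The plan is to reduce everything to the finite-dimensional Gaussian integration by parts formula, since the statement is nothing but the standard construction of the Malliavin derivative as a closed operator. The only substantive claim is closability; once that is secured, the domain $\mathbb D^{1,2}$ and the norm $\|\cdot\|_{1,2}$ are simply the closure of the smooth functionals for the graph norm, and $\mathbf D_sF$ is the evaluation at $s$ of the $\mathcal H$-valued random variable $\mathbf DF$.

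First I would establish an integration by parts formula. Given a smooth functional $F =\varphi(\mathbf B(h_1),\dots,\mathbf B(h_n))$, I would orthonormalize $h_1,\dots,h_n$ in $\mathcal H$, so that the corresponding vector $(\mathbf B(e_1),\dots,\mathbf B(e_k))$ is a standard Gaussian vector; expressing $F$ and $\mathbf DF$ in this basis and invoking the one-dimensional identity $\mathbb E(\psi'(Z)) =\mathbb E(Z\psi(Z))$ for $Z\sim\mathcal N(0,1)$ yields, for every $h\in\mathcal H$,
\[
\mathbb E(\langle\mathbf DF,h\rangle_{\mathcal H}) =\mathbb E(F\mathbf B(h)).
\]
Applying this to a product $FG$ of two smooth functionals and using the Leibniz rule $\mathbf D(FG) = F\mathbf DG + G\mathbf DF$ gives the working form
\[
\mathbb E(G\langle\mathbf DF,h\rangle_{\mathcal H}) =\mathbb E(FG\mathbf B(h)) -\mathbb E(F\langle\mathbf DG,h\rangle_{\mathcal H}).
\]

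Next I would prove closability. Let $(F_k)$ be smooth functionals with $F_k\rightarrow 0$ in $\mathbb L^2(\Omega;\mathbb R)$ and $\mathbf DF_k\rightarrow\eta$ in $\mathbb L^2(\Omega;\mathcal H)$; the goal is to show $\eta = 0$. Fix $h\in\mathcal H$ and a smooth functional $G$. Since smooth functionals have finite moments of all orders and $\mathbf B(h)$ is Gaussian, the products $F_kG$, $F_k\langle\mathbf DG,h\rangle_{\mathcal H}$ and $F_kG\mathbf B(h)$ are uniformly integrable enough to pass to the limit in the identity above: the right-hand side tends to $0$ because $F_k\rightarrow 0$ in $\mathbb L^2$, while the left-hand side tends to $\mathbb E(G\langle\eta,h\rangle_{\mathcal H})$. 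Hence $\mathbb E(G\langle\eta,h\rangle_{\mathcal H}) = 0$ for every smooth $G$; since smooth functionals are dense in $\mathbb L^2(\Omega;\mathbb R)$, this forces $\langle\eta,h\rangle_{\mathcal H} = 0$ almost surely, and letting $h$ range over a dense countable subset of $\mathcal H$ gives $\eta = 0$.

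Finally, closability means precisely that the closure of the graph of $\mathbf D$ restricted to smooth functionals is again the graph of a (single-valued) operator. I would then define $\mathbb D^{1,2}$ as the set of $F\in\mathbb L^2(\Omega;\mathbb R)$ for which there exist smooth $F_k\rightarrow F$ in $\mathbb L^2$ with $(\mathbf DF_k)$ Cauchy in $\mathbb L^2(\Omega;\mathcal H)$, and set $\mathbf DF :=\lim_k\mathbf DF_k$, which is well defined thanks to closability; this is exactly the completion of the smooth functionals for $\|\cdot\|_{1,2}$. I expect the integration by parts step to be the main obstacle, specifically the care required for the uniform integrability of the three products when passing to the limit, whereas the density argument and the graph-closure construction are routine.
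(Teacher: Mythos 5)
Your proposal is correct and takes essentially the same route as the paper, which does not prove this proposition itself but refers to Nualart \cite{NUALART06}, Proposition 1.2.1, whose proof is exactly your argument: Gaussian integration by parts on smooth functionals, the product identity $\mathbb E(G\langle\mathbf DF,h\rangle_{\mathcal H}) =\mathbb E(FG\mathbf B(h)) -\mathbb E(F\langle\mathbf DG,h\rangle_{\mathcal H})$, passage to the limit against smooth test functionals $G$, and density of the smooth functionals to conclude $\eta = 0$. The only cosmetic difference is that the limit passage you attribute to uniform integrability is usually settled directly by Cauchy--Schwarz, since $G\mathbf B(h)$ and $\langle\mathbf DG,h\rangle_{\mathcal H}$ are square integrable (polynomial growth against Gaussian moments), so the step you flag as the main obstacle is in fact routine.
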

\noindent
See Nualart \cite{NUALART06}, Proposition 1.2.1 for a proof.
%


%
\begin{definition}\label{divergence_operator}
The adjoint $\delta$ of the Malliavin derivative $\mathbf D$ is the divergence operator. The domain of $\delta$ is denoted by $\normalfont{\textrm{dom}}(\delta)$, and $Y\in\normalfont{\textrm{dom}}(\delta)$ if and only if there exists a deterministic constant $\mathfrak c_Y > 0$ such that for every $F\in\mathbb D^{1,2}$,
\begin{displaymath}
|\mathbb E(\langle\mathbf DF,Y\rangle_{\mathcal H})|
\leqslant
\mathfrak c_Y\mathbb E(F^2)^{\frac{1}{2}}.
\end{displaymath}
\end{definition}
\noindent
For any process $Y = (Y_s)_{s\in [0,T]}$ and every $t\in (0,T]$, if $Y\mathbf 1_{[0,t]}\in\textrm{dom}(\delta)$, then its Skorokhod integral with respect to $B^T$ is defined on $[0,t]$ by
\begin{displaymath}
\int_{0}^{t}Y_s\delta B_s :=
\delta(Y\mathbf 1_{[0,t]}),
\end{displaymath}
and its Skorokhod integral with respect to $X$ is defined by
\begin{displaymath}
\int_{0}^{t}
Y_s\delta X_s :=
\int_{0}^{t}Y_sb_0(X_s)ds +
\sigma\int_{0}^{t}Y_s\delta B_s.
\end{displaymath}
Note that since $\delta$ is the adjoint of the Malliavin derivative $\mathbf D$, the Skorokhod integral of $Y$ with respect to $B^T$ on $[0,t]$ is a centered random variable. Indeed,
\begin{equation}\label{zero_mean_Skorokhod_integral}
\mathbb E\left(\int_{0}^{t}Y_s\delta B_s\right) =
\mathbb E(1\cdot\delta(Y\mathbf 1_{[0,t]})) =
\mathbb E(\langle\mathbf D(1),Y\mathbf 1_{[0,t]}\rangle_{\mathcal H}) = 0.
\end{equation}
Let $\mathcal S$ be the space of the smooth functionals presented in Definition \ref{Malliavin_derivative} and consider $\mathbb D^{1,2}(\mathcal H)$, the closure of
\begin{displaymath}
\mathcal S_{\mathcal H} :=
\left\{
\sum_{j = 1}^{n}F_jh_j
\textrm{ $;$ }
h_1,\dots,h_n\in\mathcal H
\textrm{, }
F_1,\dots,F_n\in\mathcal S
\right\}
\end{displaymath}
for the norm $\|.\|_{1,2,\mathcal H}$ defined by
\begin{displaymath}
\|Y\|_{1,2,\mathcal H}^{2} :=\mathbb E(\|Y\|_{\mathcal H}^{2}) +
\mathbb E(\|\mathbf DY\|_{\mathcal H\otimes\mathcal H}^{2}).
\end{displaymath}
Consider also the norm $\|.\|_{\mathfrak H}$ defined by
\begin{displaymath}
\|h\|_{\mathfrak H} :=\left(
\alpha_H\int_{0}^{T}\int_{0}^{T}
|h(s)|\cdot |h(t)|\cdot
|t - s|^{2H - 2}dsdt\right)^{\frac{1}{2}},
\end{displaymath}
the Banach space $\mathfrak H :=\{h :\|h\|_{\mathfrak H} <\infty\}$ and
\begin{displaymath}
\mathbb D^{1,2}(\mathfrak H) :=
\{Y\in\mathbb D^{1,2}(\mathcal H) :
\mathbb E(\|Y\|_{\mathfrak H}^{2}) +
\mathbb E(\|\mathbf DY\|_{\mathfrak H\otimes\mathfrak H}^{2}) <\infty\}.
\end{displaymath}
By Nualart \cite{NUALART06}, Proposition 1.3.1,
\begin{displaymath}
\mathbb D^{1,2}(\mathfrak H)\subset
\mathbb D^{1,2}(\mathcal H)\subset {\rm dom}(\delta).
\end{displaymath}
The two following propositions are crucial in Sections \ref{risk_bound_oracle_section} and \ref{risk_bound_section}.
%


%
\begin{proposition}\label{Skorokhod_pathwise_relationship}
For every $\varphi\in C^1(\mathbb R)$ of bounded derivative, $(\varphi(X_t))_{t\in [0,T]}$ belongs to $\mathbb D^{1,2}(\mathfrak H)$ and
\begin{displaymath}
\int_{0}^{T}\varphi(X_s)\delta X_s =
\int_{0}^{T}\varphi(X_s)dX_s
-\alpha_H\sigma^2
\int_{0}^{T}\int_{0}^{t}\varphi'(X_t)
\exp\left(\int_{s}^{t}b_0'(X_u)du\right)
|t - s|^{2H - 2}dsdt.
\end{displaymath}
\end{proposition}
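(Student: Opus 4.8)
The plan is to reduce the statement to the analogous transfer formula for the Skorokhod integral against $B^T$, and then to identify the correction term by computing the Malliavin derivative of $X$ explicitly. Since $dX_s = b_0(X_s)ds +\sigma dB_s$, both the Skorokhod integral against $X$ (by its very definition) and the Young integral against $X$ split off the same absolutely continuous part $\int_0^T\varphi(X_s)b_0(X_s)ds$, so that

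\begin{displaymath}
\int_0^T\varphi(X_s)\delta X_s -\int_0^T\varphi(X_s)dX_s =\sigma\left(\int_0^T\varphi(X_s)\delta B_s -\int_0^T\varphi(X_s)dB_s\right).
\end{displaymath}

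Everything therefore boils down to comparing the divergence and the pathwise integrals of $\varphi(X)$ with respect to $B^T$.

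First I would check that $(\varphi(X_t))_{t\in[0,T]}\in\mathbb{D}^{1,2}(\mathfrak{H})$, which by the inclusion $\mathbb{D}^{1,2}(\mathfrak{H})\subset\mathbb{D}^{1,2}(\mathcal{H})\subset\mathrm{dom}(\delta)$ guarantees that the Skorokhod integral is well defined. Granting that $X_t\in\mathbb{D}^{1,2}$ for each $t$, the chain rule gives $\mathbf{D}_s\varphi(X_t)=\varphi'(X_t)\mathbf{D}_sX_t$, and the boundedness of $\varphi'$ together with an a priori bound on $\mathbf{D}X$ (see below) makes the $\mathfrak{H}$- and $\mathfrak{H}\otimes\mathfrak{H}$-norms finite, using that $|t-s|^{2H-2}$ is integrable for $H>1/2$. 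I would then invoke the transfer formula for $H>1/2$ (Nualart \cite{NUALART06}, Proposition 5.2.3): for $u\in\mathbb{D}^{1,2}(\mathfrak{H})$,

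\begin{displaymath}
\int_0^T u_t\,dB_t =\int_0^T u_t\,\delta B_t +\alpha_H\int_0^T\int_0^T\mathbf{D}_su_t\,|t-s|^{2H-2}\,ds\,dt.
\end{displaymath}

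Applied to $u_t=\varphi(X_t)$, the trace term on the right equals $\alpha_H\int_0^T\int_0^T\varphi'(X_t)\mathbf{D}_sX_t\,|t-s|^{2H-2}\,ds\,dt$, so it only remains to identify $\mathbf{D}_sX_t$.

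The core step is the Malliavin derivative of the solution. Differentiating Equation (\ref{main_equation}), using that $X_0$ is $\sigma((B_t)_{t\in\mathbb{R}_-})$-measurable so that $\mathbf{D}_sX_0=0$ for $s\in[0,T]$, and that $\mathbf{D}_sB_t=\mathbf{1}_{[0,t]}(s)$, I obtain for $s\leqslant t$

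\begin{displaymath}
\mathbf{D}_sX_t =\sigma +\int_s^t b_0'(X_r)\mathbf{D}_sX_r\,dr,
\end{displaymath}

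while adaptedness forces $\mathbf{D}_sX_t=0$ for $s>t$. For fixed $s$ this is a linear ODE in $t$ with value $\sigma$ at $t=s$, whose unique solution is $\mathbf{D}_sX_t=\sigma\exp(\int_s^t b_0'(X_u)\,du)$; in particular $|\mathbf{D}_sX_t|\leqslant|\sigma|e^{\|b_0'\|_\infty T}$, which supplies the a priori bound used above. Substituting into the trace term and keeping only the region $\{s\leqslant t\}$, the transfer formula becomes

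\begin{displaymath}
\int_0^T\varphi(X_t)\delta B_t -\int_0^T\varphi(X_t)dB_t =-\alpha_H\sigma\int_0^T\int_0^t\varphi'(X_t)\exp\left(\int_s^t b_0'(X_u)\,du\right)|t-s|^{2H-2}\,ds\,dt,
\end{displaymath}

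and multiplying by the outer factor $\sigma$ from the first display reproduces exactly the stated $-\alpha_H\sigma^2$ correction.

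I expect the main difficulty to lie not in the algebra but in the two Malliavin-calculus inputs: establishing that $X_t\in\mathbb{D}^{1,2}$ and that $\varphi(X)\in\mathbb{D}^{1,2}(\mathfrak{H})$ (controlling the relevant moments and the $\mathfrak{H}\otimes\mathfrak{H}$-integrability), and rigorously justifying the differentiation of the SDE so that the linear equation for $\mathbf{D}_sX_t$ holds in $\mathbb{L}^2(\Omega)$ and admits the exponential as its unique solution. Both are standard for SDEs driven by fractional Brownian motion with $H>1/2$ under the bounded-derivative assumption on $b_0$, and once they are secured the transfer formula and the ODE computation combine directly to yield the identity.
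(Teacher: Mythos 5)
Your proposal is correct and follows essentially the same route as the paper: derive the linear equation $\mathbf D_sX_t=\sigma\mathbf 1_{[0,t]}(s)+\int_s^t b_0'(X_u)\mathbf D_sX_u\,du$ from the SDE (using that $X_0$ is $\sigma((B_t)_{t\in\mathbb R_-})$-measurable), solve it to get $\mathbf D_sX_t=\sigma\mathbf 1_{[0,t]}(s)\exp(\int_s^t b_0'(X_u)du)$, and conclude via the chain rule and the transfer formula of Nualart, Proposition 5.2.3. Your preliminary reduction to the integrals against $B$ is implicit in the paper's argument (it is built into the definition $\int_0^t Y_s\delta X_s:=\int_0^t Y_sb_0(X_s)ds+\sigma\int_0^t Y_s\delta B_s$), and your extra attention to verifying $(\varphi(X_t))_{t\in[0,T]}\in\mathbb D^{1,2}(\mathfrak H)$ is a point the paper passes over quickly.
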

%


%
\begin{proof}
For any $s,t\in [0,T]$ such that $s < t$, since
\begin{displaymath}
\mathbf D_sB_t =
\mathbf D_s(\mathbf B(\mathbf 1_{[0,t]})) =
\mathbf 1_{[0,t]}(s),
\end{displaymath}
since $X_0$ is a $\sigma((B_u)_{u\in\mathbb R_-})$-measurable random variable, and by Equation (\ref{main_equation}) and Nualart \cite{NUALART06}, Proposition 1.2.3,
\begin{displaymath}
\mathbf D_sX_t =
\int_{0}^{t}b_0'(X_u)\mathbf D_sX_udu +\sigma\mathbf 1_{[0,t]}(s).
\end{displaymath}
Therefore,
\begin{displaymath}
\mathbf D_sX_t =
\sigma\mathbf 1_{[0,t]}(s)\exp\left(\int_{s}^{t}b_0'(X_u)du\right)
\end{displaymath}
and, by Nualart \cite{NUALART06}, Propositions 1.2.3 and 5.2.3,
\begin{eqnarray*}
 \int_{0}^{T}\varphi(X_s)\delta X_s & = &
 \int_{0}^{T}\varphi(X_s)dX_s -
 \alpha_H\sigma
 \int_{0}^{T}\int_{0}^{T}\mathbf D_s(\varphi(X_t))|t - s|^{2H - 2}dsdt\\
 & = &
 \int_{0}^{T}\varphi(X_s)dX_s\\
 & &
 \hspace{1cm} -
 \alpha_H\sigma^2
 \int_{0}^{T}\int_{0}^{t}\varphi'(X_t)
 \exp\left(\int_{s}^{t}b_0'(X_u)du\right)
 |t - s|^{2H - 2}dsdt.
\end{eqnarray*}
\end{proof}
%


%
\begin{proposition}\label{bound_variance_Skorokhod}
Consider
\begin{displaymath}
M :=\sup_{x\in\mathbb R}b_0'(x).
\end{displaymath}
There exists a constant $\mathfrak c_{\ref{bound_variance_Skorokhod}} > 0$, only depending on $H$ and $\sigma$, such that for every $\varphi\in C^1(\mathbb R)$ of bounded derivative,
\begin{eqnarray*}
 \mathbb E\left[\left(\int_{0}^{T}\varphi(X_s)\delta B_s\right)^2\right]
 & \leqslant &
 \mathfrak c_{\ref{bound_variance_Skorokhod}}\overline{\mathfrak m}_TT^{2H - 1}
 \left(\int_{0}^{T}\mathbb E(\varphi(X_s)^2)ds +
 \int_{0}^{T}\mathbb E(\varphi'(X_s)^2)ds\right)
\end{eqnarray*}
with $\overline{\mathfrak m}_T = 1\vee\mathfrak m_T$ and
\begin{displaymath}
\mathfrak m_T =
\left(-\frac{H}{M}\right)^{2H}\mathbf 1_{M < 0} +
T^{2H}\mathbf 1_{M = 0} +
\left(\frac{H}{M}\right)^{2H}e^{2MT}\mathbf 1_{M > 0}.
\end{displaymath}
\end{proposition}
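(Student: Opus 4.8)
The plan is to start from the isometry formula for the divergence operator and to reduce the computation to two deterministic kernel estimates. By Proposition \ref{Skorokhod_pathwise_relationship}, the process $Y = (\varphi(X_s))_{s\in [0,T]}$ belongs to $\mathbb D^{1,2}(\mathfrak H)\subset\mathbb D^{1,2}(\mathcal H)\subset{\rm dom}(\delta)$, so that Nualart \cite{NUALART06} (Proposition 1.3.1) yields
\[
\mathbb E\left[\left(\int_{0}^{T}\varphi(X_s)\delta B_s\right)^2\right] =
\mathbb E(\|Y\|_{\mathcal H}^{2}) +
\mathbb E(\langle\mathbf DY,\mathbf DY\rangle_{\mathcal H\otimes\mathcal H,*}),
\]
where the second summand involves the transposed kernel. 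By the Cauchy--Schwarz inequality it is dominated by $\mathbb E(\|\mathbf DY\|_{\mathcal H\otimes\mathcal H}^{2})$, and since the sign-free norms satisfy $\|\cdot\|_{\mathcal H}\leqslant\|\cdot\|_{\mathfrak H}$ and $\|\cdot\|_{\mathcal H\otimes\mathcal H}\leqslant\|\cdot\|_{\mathfrak H\otimes\mathfrak H}$, it suffices to bound $\mathbb E(\|Y\|_{\mathfrak H}^{2})$ and $\mathbb E(\|\mathbf DY\|_{\mathfrak H\otimes\mathfrak H}^{2})$ separately. Throughout I shall repeatedly use the elementary \emph{row-sum} estimate $\int_{0}^{T}|s - s'|^{2H - 2}ds'\leqslant\frac{2}{2H - 1}T^{2H - 1}$ together with $ab\leqslant\frac12(a^2 + b^2)$, which turns any kernel bilinear form $\int_0^T\int_0^T g(s)g(s')|s - s'|^{2H - 2}dsds'$ with $g\geqslant 0$ into $\lesssim_H T^{2H - 1}\int_0^T g(s)^2ds$.

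For the first term, expanding $\|Y\|_{\mathfrak H}^{2}$, taking the expectation, and bounding $\mathbb E(|\varphi(X_s)\varphi(X_{s'})|)\leqslant\mathbb E(\varphi(X_s)^2)^{1/2}\mathbb E(\varphi(X_{s'})^2)^{1/2}$, the row-sum estimate gives at once $\mathbb E(\|Y\|_{\mathfrak H}^{2})\lesssim_H T^{2H - 1}\int_0^T\mathbb E(\varphi(X_s)^2)ds$, which is the first contribution. For the second term I use the explicit Malliavin derivative established inside the proof of Proposition \ref{Skorokhod_pathwise_relationship}, namely $\mathbf D_r(\varphi(X_s)) =\sigma\varphi'(X_s)\mathbf 1_{[0,s]}(r)\exp(\int_{r}^{s}b_0'(X_u)du)$. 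Writing $h_s(r) := |\mathbf D_r(\varphi(X_s))|$, a Cauchy--Schwarz inequality in $\mathfrak H$ applied to the $(r,r')$-variables followed by the row-sum trick in the $(s,s')$-variables leads to
\[
\mathbb E(\|\mathbf DY\|_{\mathfrak H\otimes\mathfrak H}^{2})
\lesssim_H
T^{2H - 1}\int_{0}^{T}\mathbb E(\|h_s\|_{\mathfrak H}^{2})ds.
\]
Bounding $\exp(\int_r^s b_0'(X_u)du)\leqslant e^{M(s - r)}$ and substituting $a = s - r$, $b = s - r'$ then factorises $\|h_s\|_{\mathfrak H}^{2}\leqslant\alpha_H\sigma^2\varphi'(X_s)^2\,G(s)$ with the purely deterministic integral $G(s) :=\int_{0}^{s}\int_{0}^{s}e^{M(a + b)}|a - b|^{2H - 2}da\,db$.

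The main obstacle is to bound $G(s)$ uniformly over $s\in[0,T]$ by a constant (depending only on $H$) times $\mathfrak m_T$, and this is exactly where the three-case structure of $\mathfrak m_T$ originates from the sign of $M$. When $M < 0$, the integrand is integrable on $[0,\infty)^2$ and the rescaling $(a,b)\mapsto(a/|M|,b/|M|)$ extracts the factor $|M|^{-2H}\sim(-H/M)^{2H}$; when $M = 0$, the integral equals a constant times $s^{2H}\leqslant T^{2H}$; when $M > 0$, one factors out $e^{2MT}$ and controls the remaining singular integral, producing $(H/M)^{2H}e^{2MT}$. These three estimates are precisely $\lesssim_H\mathfrak m_T$, so the second contribution is $\lesssim_H\mathfrak m_T T^{2H - 1}\int_0^T\mathbb E(\varphi'(X_s)^2)ds$. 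Finally, using $\overline{\mathfrak m}_T = 1\vee\mathfrak m_T\geqslant 1$ to absorb the bare $T^{2H - 1}$ factor of the first contribution as well, summing the two bounds yields the stated inequality with a constant $\mathfrak c_{\ref{bound_variance_Skorokhod}}$ depending only on $H$ and $\sigma$. The delicate point throughout is keeping the exponential decay (or growth) coupled with the singular kernel $|a - b|^{2H - 2}$ rather than decoupling it too early, since a premature use of the row-sum estimate on the $(r,r')$-variables would yield the wrong power of $T$ and destroy the sharp $\mathfrak m_T$ dependence.
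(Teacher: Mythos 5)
Your argument is essentially correct, and it necessarily differs from the paper's treatment for a simple reason: the paper does not prove Proposition \ref{bound_variance_Skorokhod} at all, it quotes it from Hu et al. \cite{HNZ19} (Proposition 4.4.(2)) and Comte and Marie \cite{CM21} (Theorem 2.9). Those proofs share your starting point --- the divergence bound $\mathbb E(\delta(u)^2)\leqslant\mathbb E(\|u\|_{\mathfrak H}^{2})+\mathbb E(\|\mathbf Du\|_{\mathfrak H\otimes\mathfrak H}^{2})$ together with the explicit derivative $\mathbf D_r(\varphi(X_s))=\sigma\varphi'(X_s)\mathbf 1_{[0,s]}(r)\exp\left(\int_{r}^{s}b_0'(X_u)du\right)$ --- but they then estimate the $\mathfrak H$-norms through the embedding $\|h\|_{\mathfrak H}\leqslant b_H\|h\|_{\mathbb L^{1/H}([0,T])}$ (see Nualart \cite{NUALART06}, Section 5.1.3) followed by H\"older's inequality: the factor $T^{2H-1}$ comes from H\"older, and the three-case constant $\mathfrak m_T$ arises as the natural bound on $\sup_{t\leqslant T}\left(\int_{0}^{t}e^{M(t-u)/H}du\right)^{2H}$. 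You bypass the embedding with direct kernel estimates: the row-sum bound $\int_{0}^{T}|s-s'|^{2H-2}ds'\leqslant\frac{2}{2H-1}T^{2H-1}$, Cauchy--Schwarz for the bilinear form $(f,g)\mapsto\alpha_H\int_{0}^{T}\int_{0}^{T}f(r)g(r')|r-r'|^{2H-2}dr\,dr'$ --- legitimate, since this is the $\mathcal H$ inner product, which is positive semi-definite, and $\|f\|_{\mathcal H}=\|f\|_{\mathfrak H}$ for nonnegative $f$ --- and a case-by-case bound of $G(s)=\int_{0}^{s}\int_{0}^{s}e^{M(a+b)}|a-b|^{2H-2}da\,db$. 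What your route buys is a self-contained, elementary proof whose only black box is $\mathbb E(\delta(u)^2)\leqslant\|u\|_{1,2,\mathcal H}^{2}$; what it gives up is the clean constants of the references (you get Gamma-function constants instead of $H^{2H}$), which is immaterial here since the statement allows any $\mathfrak c_{\ref{bound_variance_Skorokhod}}$ depending on $H$ and $\sigma$.

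The one step you must write out carefully is the case $M>0$: ``factoring out $e^{2MT}$'' is correct only if the residual exponentials remain coupled to the singular kernel. Concretely, write $e^{M(a+b)}=e^{2MT}e^{-M(T-a)}e^{-M(T-b)}$ and substitute $x=T-a$, $y=T-b$, so that
\begin{displaymath}
G(s)\leqslant e^{2MT}\int_{0}^{\infty}\int_{0}^{\infty}e^{-M(x+y)}|x-y|^{2H-2}dx\,dy
= C_HM^{-2H}e^{2MT}
\quad {\rm with}\quad
C_H :=\int_{0}^{\infty}\int_{0}^{\infty}e^{-(x+y)}|x-y|^{2H-2}dx\,dy <\infty,
\end{displaymath}
by the same rescaling as in your case $M<0$; equivalently, by symmetry and the change of variable $c=b-a$,
\begin{displaymath}
G(s)=2\int_{0}^{s}e^{2Mb}\left(\int_{0}^{b}e^{-Mc}c^{2H-2}dc\right)db
\leqslant\Gamma(2H-1)M^{-2H}e^{2Ms}.
\end{displaymath}
The genuinely decoupled bound (replacing $e^{M(a+b)}$ by $e^{2MT}$ and integrating the bare kernel) yields $e^{2MT}T^{2H}$, which is \emph{not} dominated by a constant times $\mathfrak m_T$ when $MT$ is large. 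Your closing sentence shows you are aware of this trap, but as phrased, the $M>0$ case reads like the decoupled bound; make the coupled computation explicit and the proof is complete.
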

\noindent
See Hu et al. \cite{HNZ19}, Proposition 4.4.(2) and Comte and Marie \cite{CM21}, Theorem 2.9 for a proof.
\\
\\
For details on the Malliavin calculus, the reader may refer to Decreusefond \cite{DECREUSEFOND22} and Nualart \cite{NUALART06}.
%


%
\section{An $\mathbb L^2$-error bound on the auxiliary estimator $\widehat b_m$}\label{risk_bound_oracle_section}
First, the probability distribution of $X_t$, $t\in (0,T]$, and the function $b_0$ need to fulfill the following assumption.
%


%
\begin{assumption}\label{conditions_probability_distribution_X_t}
For every $t\in (0,T]$, the probability distribution of $X_t$ has a density $f_t$ with respect to Lebesgue's measure such that:
\begin{enumerate}
 \item The function $t\mapsto f_t(x)$ belongs to $\mathbb L^1([0,T])$ for every $x\in\mathbb R$.
 \item The function $|b_0|^{\alpha}$ belongs to $\mathbb L^2(\mathbb R,f(x)dx)$ for every $\alpha\in\mathbb R_+$, and $\mathcal S_m$ is a subset of $\mathbb L^2(\mathbb R,f(x)dx)$, where $f$ is the density function defined by
 \begin{displaymath}
 f(x) :=\frac{1}{T}\int_{0}^{T}f_s(x)ds
 \textrm{ $;$ }\forall x\in\mathbb R.
 \end{displaymath}
\end{enumerate}
\end{assumption}
%


%
\begin{example}\label{examples_conditions_probability_distribution_X_t}
In the two following situations, Assumption \ref{conditions_probability_distribution_X_t} is fulfilled:
\begin{enumerate}
 \item Assume that $X_0(.) = x_0$ with $x_0\in\mathbb R$. For every $t\in (0,T]$, the probability distribution of $X_t$ has a density $f_t$ with respect to Lebesgue's measure such that, for every $x\in\mathbb R$,
 \begin{equation}\label{Gaussian_bound_density}
 f_t(x)\leqslant
 \mathfrak c_{H,T}t^{-H}
 \exp\left(-\mathfrak m_{H,T}\frac{(x - x_0)^2}{t^{2H}}\right)
 \end{equation}
 where $\mathfrak c_{H,T}$ and $\mathfrak m_{H,T}$ are positive constants depending on $T$ but not on $t$ and $x$ (see Li et al. \cite{LPS23}, Theorem 1.3). So, $t\mapsto f_t(x)$ belongs to $\mathbb L^1([0,T])$. Still by Inequality (\ref{Gaussian_bound_density}),
 \begin{displaymath}
 |b_0|^{\alpha}\in\mathbb L^2(\mathbb R,f(x)dx)
 \textrm{ $;$ }
 \forall\alpha\in\mathbb R_+
 \end{displaymath}
 because $b_0'$ is bounded, and $\mathcal S_m\subset\mathbb L^2(\mathbb R,f(x)dx)$ because the $\varphi_j$'s belong to $\mathbb L^2(\mathbb R,dx)$.
 \item Assume that $b_0$ satisfies the dissipativity condition
 \begin{equation}\label{dissipativity_condition}
 \exists\mathfrak m > 0 :\forall x\in\mathbb R\textrm{, }b_0'(x)\leqslant -\mathfrak m.
 \end{equation}
 Then, Equation (\ref{main_equation}) has a unique stationary solution $X$ with a $\sigma((B_t)_{t\in\mathbb R_-})$-measurable initial condition $X_0$, and the common probability distribution of the $X_t$'s has a density $f_0$ with respect to Lebesgue's measure such that, for every $x\in\mathbb R$,
 \begin{displaymath}
 f_0(x)\leqslant\mathfrak c_He^{-\mathfrak m_Hx^2},
 \end{displaymath}
 where $\mathfrak c_H$ and $\mathfrak m_H$ are positive constants not depending on $x$ (see Li et al. \cite{LPS23}, Theorem 1.1). So, $b_0$ and $f = f_0$ fulfill Assumption \ref{conditions_probability_distribution_X_t}.(2). Of course, Assumption \ref{conditions_probability_distribution_X_t}.(1) is fulfilled with $f_t = f_0$ for every $t\in (0,T]$.
\end{enumerate}
\end{example}
\noindent
{\bf Notations:}
\begin{itemize}
 \item For every measurable function $\varphi :\mathbb R\rightarrow\mathbb R$,
 \begin{displaymath}
 \|\varphi\|^2 :=\int_I\varphi(x)^2dx,
 \quad
 \|\varphi\|_{f}^{2} :=\int_I\varphi(x)^2f(x)dx
 \quad {\rm and}\quad
 \|\varphi\|_{f^2}^{2} :=\int_I\varphi(x)^2f(x)^2dx.
 \end{displaymath}
 \item The orthonormal projection of $b_0f$ on $\mathcal S_m$ is denoted by $(bf)_m$ in the sequel:
 \begin{displaymath}
 (bf)_m :=\sum_{j = 1}^{m}\langle b_0f,\varphi_j\rangle\varphi_j.
 \end{displaymath}
 Moreover, $b_m := (bf)_m/f$.
\end{itemize}
The following proposition provides a suitable $\mathbb L^2$-error bound on $\widehat b_m$.
%


%
\begin{proposition}\label{risk_bound_oracle}
Under Assumption \ref{conditions_probability_distribution_X_t},
\begin{displaymath}
\mathbb E(\|\widehat b_m - b_0\|_{f^2}^{2})\leqslant
\|b_m - b_0\|_{f^2}^{2} +
\frac{2}{N}\left(\|b_0\|_{f}^{2}\mathfrak L(m) +
\mathfrak c_{\ref{bound_variance_Skorokhod}}\sigma^2
\frac{\overline{\mathfrak m}_T}{T^{2 - 2H}}(\mathfrak L(m) +\mathfrak R(m))\right),
\end{displaymath}
where
\begin{displaymath}
\mathfrak L(m) :=
\sum_{j = 1}^{m}\|\varphi_j\|_{\infty}^{2}
\quad\textrm{ and }\quad
\mathfrak R(m) :=
\sum_{j = 1}^{m}\|\varphi_j'\|_{\infty}^{2}.
\end{displaymath}
\end{proposition}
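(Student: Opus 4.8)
The plan is to bound the estimation error by a standard bias-variance decomposition adapted to the weighted norm $\|\cdot\|_{f^2}$. First I would write $\widehat b_m$ in terms of its coefficients. Set
\begin{displaymath}
\widehat\theta_j :=\frac{1}{NT}\sum_{i=1}^{N}\int_{0}^{T}\varphi_j(X_s^i)\delta X_s^i,
\end{displaymath}
so that $\widehat b_m f =\sum_{j=1}^m\widehat\theta_j\varphi_j$. The key computation is to identify $\mathbb E(\widehat\theta_j)$. Using the definition of the Skorokhod integral with respect to $X$, namely $\int_0^T\varphi_j(X_s)\delta X_s=\int_0^T\varphi_j(X_s)b_0(X_s)ds+\sigma\int_0^T\varphi_j(X_s)\delta B_s$, together with the zero-mean property (\ref{zero_mean_Skorokhod_integral}) of the Skorokhod integral against $B^T$, I get
\begin{displaymath}
\mathbb E(\widehat\theta_j)=\frac{1}{T}\int_0^T\mathbb E(\varphi_j(X_s)b_0(X_s))ds=\int_{\mathbb R}\varphi_j(x)b_0(x)f(x)dx=\langle b_0 f,\varphi_j\rangle,
\end{displaymath}
where the second equality uses the definition of $f$ as the time-average of the $f_s$'s. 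Hence $\mathbb E(\widehat b_m f)=(bf)_m$, i.e. $\widehat b_m$ is an unbiased estimator of $b_m=(bf)_m/f$ in the sense that $\mathbb E(\widehat\theta_j)$ equals the $j$-th coefficient of the projection $(bf)_m$.

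Next I would perform the decomposition in the $\|\cdot\|_{f^2}$-norm. Since $(\widehat b_m-b_0)f=(\widehat b_m-b_m)f+(b_m-b_0)f$, and because $\|\psi\|_{f^2}^2=\int_I\psi(x)^2 f(x)^2dx=\|\psi f\|^2$ is just the plain $\mathbb L^2(I)$-norm of $\psi f$, the cross term vanishes upon taking expectation thanks to the unbiasedness just established: $(\widehat b_m-b_m)f=\sum_j(\widehat\theta_j-\langle b_0f,\varphi_j\rangle)\varphi_j$ has mean zero and is orthogonal (in $\mathbb L^2(I)$) to the deterministic remainder, so
\begin{displaymath}
\mathbb E(\|\widehat b_m-b_0\|_{f^2}^2)=\|b_m-b_0\|_{f^2}^2+\mathbb E(\|\widehat b_m-b_m\|_{f^2}^2),
\end{displaymath}
which isolates the bias term exactly as in the statement. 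The variance term becomes, using orthonormality of the $\varphi_j$,
\begin{displaymath}
\mathbb E(\|\widehat b_m-b_m\|_{f^2}^2)=\sum_{j=1}^m\mathrm{Var}(\widehat\theta_j)=\frac{1}{N}\sum_{j=1}^m\mathrm{Var}\left(\frac{1}{T}\int_0^T\varphi_j(X_s)\delta X_s\right),
\end{displaymath}
the factor $1/N$ coming from independence of the copies.

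The main obstacle — and the technical heart of the proof — is bounding each $\mathrm{Var}(\frac1T\int_0^T\varphi_j(X_s)\delta X_s)$. I would split the Skorokhod integral into its drift part and its $B^T$-part. For the drift part $\frac1T\int_0^T\varphi_j(X_s)b_0(X_s)ds$, a bound by the second moment controls it by $\frac1T\int_0^T\mathbb E(\varphi_j(X_s)^2b_0(X_s)^2)ds\leqslant\|\varphi_j\|_\infty^2\|b_0\|_f^2$, after pulling out $\|\varphi_j\|_\infty^2$ and recognizing the time-averaged expectation of $b_0^2$ as $\|b_0\|_f^2$; summing over $j$ produces the $\|b_0\|_f^2\mathfrak L(m)$ contribution. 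For the $B^T$-part $\frac{\sigma}{T}\int_0^T\varphi_j(X_s)\delta B_s$, this is exactly where Proposition \ref{bound_variance_Skorokhod} is used: it gives
\begin{displaymath}
\mathbb E\left[\left(\int_0^T\varphi_j(X_s)\delta B_s\right)^2\right]\leqslant\mathfrak c_{\ref{bound_variance_Skorokhod}}\overline{\mathfrak m}_T T^{2H-1}\left(\int_0^T\mathbb E(\varphi_j(X_s)^2)ds+\int_0^T\mathbb E(\varphi_j'(X_s)^2)ds\right),
\end{displaymath}
and after division by $T^2$ and bounding $\frac1T\int_0^T\mathbb E(\varphi_j(X_s)^2)ds\leqslant\|\varphi_j\|_\infty^2$ and similarly the derivative term by $\|\varphi_j'\|_\infty^2$, summation over $j$ yields the $\mathfrak c_{\ref{bound_variance_Skorokhod}}\sigma^2\frac{\overline{\mathfrak m}_T}{T^{2-2H}}(\mathfrak L(m)+\mathfrak R(m))$ term. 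Finally I would combine the two parts via the elementary inequality $\mathrm{Var}(U+V)\leqslant 2\,\mathrm{Var}(U)+2\,\mathrm{Var}(V)$ (equivalently $(a+b)^2\leqslant 2a^2+2b^2$), which accounts for the overall factor $2/N$ in the statement. The delicate point to check carefully is the legitimacy of interchanging expectation and the time-integrals, and verifying that Assumption \ref{conditions_probability_distribution_X_t}.(2) guarantees the finiteness of all the moments $\mathbb E(\varphi_j(X_s)^2 b_0(X_s)^2)$ that appear, which is precisely what the integrability of $|b_0|^\alpha$ against $f$ provides.
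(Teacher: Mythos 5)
Your proposal is correct and follows essentially the same route as the paper's proof: the Pythagorean bias--variance split in $\|\cdot\|_{f^2}$ (equivalently the plain $\mathbb L^2$-norm after multiplying by $f$), identification of $\mathbb E(\widehat\theta_j)=\langle b_0f,\varphi_j\rangle$ via the zero-mean property of the Skorokhod integral, reduction of the variance term to single-copy variances by independence, the elementary factor-$2$ inequality to separate the drift and $\delta B$ parts, and Proposition \ref{bound_variance_Skorokhod} to handle the latter. The only cosmetic difference is that you kill the cross term by combining unbiasedness with orthogonality, whereas the paper notes it vanishes pathwise since $(bf)_m$ is the orthogonal projection of $b_0f$ onto $\mathcal S_m$; both are valid.
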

%


%
\begin{proof}
First of all,
\begin{eqnarray}
 \mathbb E(\|\widehat b_m - b_0\|_{f^2}^{2}) & = &
 \mathbb E(\|\widehat{bf}_m - b_0f\|^2)
 \nonumber\\
 \label{risk_bound_oracle_1}
 & = &
 \|(bf)_m - b_0f\|^2 +
 \mathbb E(\|\widehat{bf}_m - (bf)_m\|^2)
\end{eqnarray}
because $\widehat{bf}_m(\omega)$ ($\omega\in\Omega$) and $(bf)_m$ belong to $\mathcal S_m$, and $(bf)_m$ is the orthogonal projection of $b_0f$ on $\mathcal S_m$. Since $X^1,\dots,X^N$ are independent copies of $X$, and since the Skorokhod integral with respect to $B^T$ is centered, for every $j\in\{1,\dots,m\}$,
\begin{eqnarray*}
 \mathbb E\left(\frac{1}{NT}\sum_{i = 1}^{N}\int_{0}^{T}\varphi_j(X_{s}^{i})\delta X_{s}^{i}\right)
 & = &
 \frac{1}{T}\mathbb E\left(\int_{0}^{T}\varphi_j(X_s)b_0(X_s)ds\right)\\
 & = &
 \frac{1}{T}\int_{-\infty}^{\infty}\varphi_j(x)b_0(x)\int_{0}^{T}f_s(x)dsdx =
 \langle b_0f,\varphi_j\rangle.
\end{eqnarray*}
Then, since $(\varphi_1,\dots,\varphi_m)$ is an orthonormal family in $\mathbb L^2(\mathbb R,dx)$,
\begin{eqnarray*}
 \mathbb E(\|\widehat{bf}_m - (bf)_m\|^2) & = &
 \mathbb E\left[
 \sum_{j = 1}^{m}\left(
 \frac{1}{NT}\sum_{i = 1}^{N}\int_{0}^{T}\varphi_j(X_{s}^{i})\delta X_{s}^{i} -
 \langle b_0f,\varphi_j\rangle\right)^2\right]\\
 & = &
 \sum_{j = 1}^{m}
 {\rm var}\left(
 \frac{1}{NT}\sum_{i = 1}^{N}\int_{0}^{T}\varphi_j(X_{s}^{i})\delta X_{s}^{i}\right)\\
 & \leqslant &
 \frac{2}{N}\sum_{j = 1}^{m}
 \left[
 \mathbb E\left[\left(\frac{1}{T}\int_{0}^{T}\varphi_j(X_s)b_0(X_s)ds\right)^2\right] +
 \sigma^2\mathbb E\left[\left(
 \frac{1}{T}\int_{0}^{T}\varphi_j(X_s)\delta B_s\right)^2\right]
 \right].
\end{eqnarray*}
Moreover,
\begin{eqnarray*}
 \sum_{j = 1}^{m}
 \mathbb E\left[\left(\frac{1}{T}\int_{0}^{T}\varphi_j(X_s)b_0(X_s)ds\right)^2\right]
 & \leqslant &
 \frac{1}{T}
 \sum_{j = 1}^{m}\int_{0}^{T}\mathbb E(\varphi_j(X_s)^2b_0(X_s)^2)ds\\
 & = &
 \sum_{j = 1}^{m}
 \int_{-\infty}^{\infty}\varphi_j(x)^2b_0(x)^2f(x)dx
 \leqslant
 \mathfrak L(m)\|b_0\|_{f}^{2}
\end{eqnarray*}
and, by Proposition \ref{bound_variance_Skorokhod},
\begin{eqnarray*}
 \sum_{j = 1}^{m}
 \mathbb E\left[\left(
 \frac{1}{T}\int_{0}^{T}\varphi_j(X_s)\delta B_s\right)^2\right]
 & \leqslant &
 \mathfrak c_{\ref{bound_variance_Skorokhod}}
 \frac{\overline{\mathfrak m}_T}{T^{2 - 2H}}
 \sum_{j = 1}^{m}\int_{-\infty}^{\infty}(\varphi_j(x)^2 +\varphi_j'(x)^2)f(x)dx\\
 & \leqslant &
 \mathfrak c_{\ref{bound_variance_Skorokhod}}
 \frac{\overline{\mathfrak m}_T}{T^{2 - 2H}}(\mathfrak L(m) +\mathfrak R(m)).
\end{eqnarray*}
Therefore, by Equality (\ref{risk_bound_oracle_1}),
\begin{displaymath}
\mathbb E(\|\widehat b_m - b_0\|_{f^2}^{2})\leqslant
\|(bf)_m - b_0f\|^2 +
\frac{2}{N}\left(\|b_0\|_{f}^{2}\mathfrak L(m) +
\mathfrak c_{\ref{bound_variance_Skorokhod}}\sigma^2
\frac{\overline{\mathfrak m}_T}{T^{2 - 2H}}(\mathfrak L(m) +\mathfrak R(m))\right).
\end{displaymath}
\end{proof}
%


%
\section{Existence, uniqueness and $\mathbb L^2$-error bound on the fixed point estimator $\widetilde b_m$}\label{risk_bound_section}
The Skorokhod integral, and then $\widehat b_m$ are uncomputable. However, by Proposition \ref{Skorokhod_pathwise_relationship},
\begin{displaymath}
\int_{0}^{T}\varphi_j(X_{s}^{i})\delta X_{s}^{i} =
\underbrace{
\int_{0}^{T}\varphi_j(X_{s}^{i})dX_{s}^{i}}_{= I_{i,j}} -
\alpha_H\sigma^2\int_{0}^{T}\int_{0}^{t}
\varphi_j'(X_{t}^{i})\exp\left(\int_{s}^{t}b_0'(X_{u}^{i})du\right)|t - s|^{2H - 2}dsdt
\end{displaymath}
for every $i\in\{1,\dots,N\}$ and $j\in\{1,\dots,m\}$. Then, $\widehat b_m =\Phi_m(b_0)$, where $\Phi_m$ has been defined in Section \ref{section_introduction} by
\begin{displaymath}
\Phi_m(\varphi)(x) :=
\frac{1}{f(x)}\sum_{j = 1}^{m}\left[
\frac{1}{NT}\sum_{i = 1}^{N}\left[I_{i,j} -
\mathfrak a\int_{0}^{T}\int_{0}^{t}
\varphi_j'(X_{t}^{i})\exp\left(\int_{s}^{t}\varphi'(X_{u}^{i})du\right)|t - s|^{2H - 2}dsdt\right]\right]\varphi_j(x)
\end{displaymath}
for every $\varphi\in C^1(\mathbb R)$ and $x\in\mathbb R$. Since $\widehat b_m$ is a converging estimator of $b_0$ as established in Section \ref{risk_bound_oracle_section},
\begin{displaymath}
\widehat b_m =
\Phi_m(b_0)\approx
\Phi_m(\widehat b_m),
\end{displaymath}
which legitimates to consider the fixed point $\widetilde b_m$ of $\Phi_m$ as a computable estimator of $b_0$. This section deals with the existence, the uniqueness (thanks to Picard's theorem) and an $\mathbb L^2$-error bound on $\widetilde b_m$.
\\
\\
In the sequel, consider the map $F_m$ from $\mathbb R^m$ into itself such that
\begin{displaymath}
F_m(\theta)_j :=
\frac{\mathfrak a}{NT}\sum_{i = 1}^{N}
\int_{0}^{T}\int_{0}^{t}\varphi_j'(X_{t}^{i})
\exp\left(\sum_{\ell = 1}^{m}\theta_{\ell}\int_{s}^{t}\varphi_{\ell}'(X_{u}^{i})du\right)
|t - s|^{2H - 2}dsdt
\end{displaymath}
for every $\theta\in\mathbb R^m$ and $j\in\{1,\dots,m\}$. Moreover, $I$ is compact and the density function $f$ fulfills the following additional assumption.
%


%
\begin{assumption}\label{lower_bound_f}
There exists $\mathfrak m_f > 0$ such that
\begin{displaymath}
f(x)\geqslant\mathfrak m_f
\textrm{ $;$ }\forall x\in I.
\end{displaymath}
\end{assumption}
%


%
\begin{example}\label{examples_lower_bound_f}
In the two following situations, the density function $f$ fulfills Assumption \ref{lower_bound_f}:
\begin{enumerate}
 \item Assume that $X_0(.) = x_0$ with $x_0\in\mathbb R$. By Li et al. \cite{LPS23}, Theorem 1.3, there exist two constants $\overline{\mathfrak c}_{H,T},\overline{\mathfrak m}_{H,T} > 0$ such that, for every $t\in (0,T]$ and $x\in I$,
 \begin{eqnarray*}
  f_t(x) & \geqslant &
  \overline{\mathfrak c}_{H,T}t^{-H}
  \exp\left(-\overline{\mathfrak m}_{H,T}\frac{(x - x_0)^2}{t^{2H}}\right)\\
  & \geqslant &
  u_I(t) :=\overline{\mathfrak c}_{H,T}t^{-H}
  \exp\left(-\overline{\mathfrak m}_{H,T}\frac{x_{I}^{2}}{t^{2H}}\right) > 0
 \end{eqnarray*}
 with $x_I = |\min(I -\{x_0\})|\vee |\max(I -\{x_0\})|$. Then, $f$ fulfills Assumption \ref{lower_bound_f} with
 \begin{displaymath}
 \mathfrak m_f =\frac{1}{T}\int_{0}^{T}u_I(s)ds > 0.
 \end{displaymath}
 \item Assume that $b_0$ fulfills the dissipativity condition (\ref{dissipativity_condition}). By Li et al. \cite{LPS23}, Theorem 1.1, there exist two constants $\overline{\mathfrak c}_H,\overline{\mathfrak m}_H > 0$ such that, for every $x\in I$,
 \begin{displaymath}
 f_0(x)\geqslant
 \overline{\mathfrak c}_H
 e^{-\overline{\mathfrak m}_Hx^2}
 \geqslant
 \overline{\mathfrak c}_H
 e^{-\overline{\mathfrak m}_Hx_{I}^{2}} > 0
 \end{displaymath}
 with $x_I = |\min(I)|\vee |\max(I)|$. Then, $f = f_0$ fulfills Assumption \ref{lower_bound_f} with $\mathfrak m_f =\overline{\mathfrak c}_He^{-\overline{\mathfrak m}_Hx_{I}^{2}}$.
\end{enumerate}
\end{example}
\noindent
Note that under Assumption \ref{lower_bound_f}, the norms $\|.\|$ and $\|.\|_{f^2}$ are equivalent on $\mathcal S_{m,\mathfrak c}$. Indeed, for every $\varphi\in\mathcal S_{m,\mathfrak c}$,
\begin{displaymath}
\|\varphi\|_{f^2}\leqslant\|f\|_{\infty}^{2}\|\varphi\|
\quad {\rm and}\quad
\|\varphi\|\leqslant
\frac{1}{\mathfrak m_{f}^{2}}\|\varphi\|_{f^2}.
\end{displaymath}
{\bf Notations:}
\begin{itemize}
 \item For any $\varphi\in\mathcal S_m$, its coordinate vector in the basis $(\varphi_1,\dots,\varphi_m)$ is denoted by $\theta^{\varphi} = (\theta_{1}^{\varphi},\dots,\theta_{m}^{\varphi})$.
 \item The usual norm on $\mathcal L(\mathbb R^m)$ is denoted by $\|.\|_{\rm op}$.
\end{itemize}
First, an event on which the map $\Phi_m$ has a unique fixed point in $\mathcal S_{m,\mathfrak c}$ is provided in the following proposition.
%


%
\begin{proposition}\label{existence_uniqueness_fixed_point}
Under Assumptions \ref{conditions_probability_distribution_X_t} and \ref{lower_bound_f}, consider $\mathfrak l\in (0,1)$ and the event $\Delta_m :=\Delta_{m,\mathfrak c}\cap\Delta_{m,\mathfrak l}$, where
\begin{displaymath}
\Delta_{m,\mathfrak c} :=
\left\{\sup_{\varphi\in\mathcal S_{m,\mathfrak c}}\|\Phi_m(\varphi)'\|_{\infty}\leqslant\mathfrak c\right\}
\quad\textrm{and}\quad
\Delta_{m,\mathfrak l} :=\left\{\frac{1}{\mathfrak m_{f}^{2}}
\sup_{\varphi\in\mathcal S_{m,\mathfrak c}}\|DF_m(\theta^{\varphi})\|_{\rm op}^{2}\leqslant\mathfrak l^2\right\}.
\end{displaymath}
On the event $\Delta_m$, $\Phi_m$ is a contraction from $\mathcal S_{m,\mathfrak c}$ into itself, and then it has a unique fixed point $\widetilde b_m$ in $\mathcal S_{m,\mathfrak c}$.
\end{proposition}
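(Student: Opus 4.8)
The plan is to apply Banach's fixed point theorem (Picard) to $\Phi_m$ on the metric space $(\mathcal S_{m,\mathfrak c},\|\cdot\|_{f^2})$, so three things must be checked: that this space is complete, that $\Phi_m$ maps it into itself on $\Delta_{m,\mathfrak c}$, and that $\Phi_m$ is a contraction on $\Delta_{m,\mathfrak l}$. Completeness is the least delicate point: $\mathcal S_{m,\mathfrak c}=\{\varphi\in\mathcal S_m:\|\varphi'\|_\infty\leqslant\mathfrak c\}$ is the sublevel set of a seminorm, hence a closed convex subset of the finite-dimensional space $\mathcal S_m$, and on it the norms $\|\cdot\|$ and $\|\cdot\|_{f^2}$ are equivalent (Assumption \ref{lower_bound_f}); a closed subset of a finite-dimensional normed space is complete. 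Convexity will also be used in the contraction step, so I record it here.

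For the stability step, I would read $\Phi_m(\varphi)$ through its coordinate vector: by definition its $j$-th coefficient is $\frac{1}{NT}\sum_{i=1}^N I_{i,j}-F_m(\theta^\varphi)_j$, which depends on $\varphi$ only through $F_m(\theta^\varphi)$. Thus membership in $\mathcal S_m$ is built in, and the only nontrivial requirement for $\Phi_m(\varphi)\in\mathcal S_{m,\mathfrak c}$ is the bound $\|\Phi_m(\varphi)'\|_\infty\leqslant\mathfrak c$; uniformly in $\varphi\in\mathcal S_{m,\mathfrak c}$ this is exactly the defining condition of $\Delta_{m,\mathfrak c}$. Hence on $\Delta_{m,\mathfrak c}$ one has $\Phi_m(\mathcal S_{m,\mathfrak c})\subseteq\mathcal S_{m,\mathfrak c}$.

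The contraction estimate is the crux. Given $\varphi_1,\varphi_2\in\mathcal S_{m,\mathfrak c}$, the data terms $I_{i,j}$ are $\varphi$-free and cancel in the difference, leaving $\Phi_m(\varphi_1)-\Phi_m(\varphi_2)=\frac1f\sum_{j=1}^m(F_m(\theta^{\varphi_2})_j-F_m(\theta^{\varphi_1})_j)\varphi_j$. Working with $\|\cdot\|_{f^2}^2$ cancels the factor $1/f^2$, and the orthonormality of $(\varphi_1,\dots,\varphi_m)$ in $\mathbb L^2(\mathbb R,dx)$ turns the squared $\mathbb L^2$-norm of this linear combination into the squared Euclidean norm of its coefficient vector, so that $\|\Phi_m(\varphi_1)-\Phi_m(\varphi_2)\|_{f^2}^2=\|F_m(\theta^{\varphi_1})-F_m(\theta^{\varphi_2})\|_{\mathbb R^m}^2$. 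I would then bound the right-hand side via the mean value inequality for $F_m$ along the segment $[\theta^{\varphi_1},\theta^{\varphi_2}]$, which remains in the convex coordinate image of $\mathcal S_{m,\mathfrak c}$, by $\sup_{\varphi\in\mathcal S_{m,\mathfrak c}}\|DF_m(\theta^\varphi)\|_{\rm op}^2\,\|\theta^{\varphi_1}-\theta^{\varphi_2}\|_{\mathbb R^m}^2$; orthonormality again gives $\|\theta^{\varphi_1}-\theta^{\varphi_2}\|_{\mathbb R^m}^2=\|\varphi_1-\varphi_2\|^2$, and Assumption \ref{lower_bound_f} gives $\|\varphi_1-\varphi_2\|^2\leqslant\mathfrak m_f^{-2}\|\varphi_1-\varphi_2\|_{f^2}^2$. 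Collecting the constants produces precisely the prefactor $\mathfrak m_f^{-2}\sup_\varphi\|DF_m(\theta^\varphi)\|_{\rm op}^2$ that $\Delta_{m,\mathfrak l}$ caps by $\mathfrak l^2<1$, making $\Phi_m$ an $\mathfrak l$-contraction for $\|\cdot\|_{f^2}$ on $\Delta_{m,\mathfrak l}$. Applying the mean value inequality presupposes $F_m\in C^1(\mathbb R^m;\mathbb R^m)$, which is where I expect the routine-but-real work to sit: differentiating $F_m$ under the double integral is legitimate because $\theta\mapsto\exp(\sum_\ell\theta_\ell\int_s^t\varphi_\ell'(X_u^i)du)$ is smooth, the factors $\varphi_j',\varphi_\ell'$ are bounded on the compact $I$, and $|t-s|^{2H-2}$ is integrable on $[0,T]^2$ since $2H-2>-1$.

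Finally, on $\Delta_m=\Delta_{m,\mathfrak c}\cap\Delta_{m,\mathfrak l}$ the three hypotheses hold simultaneously, so Banach's fixed point theorem yields a unique $\widetilde b_m\in\mathcal S_{m,\mathfrak c}$ with $\Phi_m(\widetilde b_m)=\widetilde b_m$. The main obstacle is genuinely the contraction step: obtaining the cancellation of the $\varphi$-free data terms, passing cleanly between the coordinate norm, $\|\cdot\|$ and $\|\cdot\|_{f^2}$ through orthonormality and Assumption \ref{lower_bound_f}, and controlling the $\theta$-derivative of $F_m$ so that the Lipschitz constant is exactly the quantity frozen by $\Delta_{m,\mathfrak l}$; convexity of $\mathcal S_{m,\mathfrak c}$ is what makes the mean value inequality available along the relevant segment.
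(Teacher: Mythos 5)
Your proof is correct and follows essentially the same route as the paper's: stability on $\Delta_{m,\mathfrak c}$ read off from the definition, then the contraction bound obtained by using orthonormality to identify $\|\Phi_m(\varphi)-\Phi_m(\overline\varphi)\|_{f^2}$ with the Euclidean norm $\|F_m(\theta^{\varphi})-F_m(\theta^{\overline\varphi})\|_m$, the mean value inequality with $\sup_{\psi\in\mathcal S_{m,\mathfrak c}}\|DF_m(\theta^{\psi})\|_{\rm op}$, and the equivalence $\|\cdot\|\leqslant\mathfrak m_f^{-1}\|\cdot\|_{f^2}$ from Assumption \ref{lower_bound_f}, so that $\Delta_{m,\mathfrak l}$ caps the Lipschitz constant by $\mathfrak l$. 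The points you spell out that the paper leaves implicit (completeness of $\mathcal S_{m,\mathfrak c}$, convexity of its coordinate image for the mean value inequality, and $C^1$-regularity of $F_m$ by differentiation under the integral) are correct and welcome additions rather than deviations.
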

%


%
\begin{proof}
First, by the definition of $\Delta_{m,\mathfrak c}$, $\Phi_m(\mathcal S_{m,\mathfrak c})\subset\mathcal S_{m,\mathfrak c}$ on $\Delta_{m,\mathfrak c}$. For any $\varphi,\overline\varphi\in\mathcal S_{m,\mathfrak c}$, since $(\varphi_1,\dots,\varphi_m)$ is an orthonormal family of $\mathbb L^2(\mathbb R,dx)$,
\begin{eqnarray*}
 \|\Phi_m(\varphi) -\Phi_m(\overline\varphi)\|_{f^2}^{2} & = &
 \|F_m(\theta^{\varphi}) - F_m(\theta^{\overline\varphi})\|_{m}^{2}\\
 & \leqslant &
 \left(\sup_{\psi\in\mathcal S_{m,\mathfrak c}}\|DF_m(\theta^{\psi})\|_{\rm op}\right)^2
 \|\theta^{\varphi} -\theta^{\overline\varphi}\|_{m}^{2}
 \leqslant
 \left(\frac{1}{\mathfrak m_{f}^{2}}
 \sup_{\psi\in\mathcal S_{m,\mathfrak c}}\|DF_m(\theta^{\psi})\|_{\rm op}^{2}\right)
 \|\varphi -\overline\varphi\|_{f^2}^{2}.
\end{eqnarray*}
This leads to
\begin{displaymath}
\|\Phi_m(\varphi) -\Phi_m(\overline\varphi)\|_{f^2}\leqslant
\mathfrak l\|\varphi -\overline\varphi\|_{f^2}
\quad {\rm on}\quad\Delta_{m,\mathfrak l}.
\end{displaymath}
Therefore, $\Phi_m$ is a contraction from $\mathcal S_{m,\mathfrak c}$ into itself on the event $\Delta_m$.
\end{proof}
\noindent
Now, assume that $b_0$ fulfills the dissipativity condition (\ref{dissipativity_condition}). Then, as already mentioned in Section \ref{risk_bound_oracle_section}, Equation (\ref{main_equation}) has a unique stationary solution $X$, and $f = f_0$ is the density of the common probability distribution of the $X_t$'s. Assumptions \ref{conditions_probability_distribution_X_t} and \ref{lower_bound_f} are fulfilled as established in Example \ref{examples_conditions_probability_distribution_X_t}.(2) and Example \ref{examples_lower_bound_f}.(2) respectively. Moreover, consider
\begin{displaymath}
\mathfrak I(m) :=
\sum_{j = 1}^{m}\|\overline\varphi_j\|_{\infty}^2
\end{displaymath}
where, for every $j\in\{1,\dots,m\}$, $\overline\varphi_j$ is a primitive function of $\varphi_j$. In the sequel, $X^1,\dots,X^N$ are independent copies of the stationary solution $X$ of Equation (\ref{main_equation}), and the $\varphi_j$'s fulfill the following additional assumption.
%


%
\begin{assumption}\label{assumption_LRI}
There exists a constant $\mathfrak c_{\varphi} > 0$, not depending on $m$, such that
\begin{displaymath}
\mathfrak I(m)\vee\mathfrak L(m)\leqslant\mathfrak c_{\varphi}\mathfrak R(m).
\end{displaymath}
\end{assumption}
%


%
\begin{example}\label{example_assumption_LRI}
Assume that $I = [\ell,{\tt r}]$ with $\ell,{\tt r}\in\mathbb R$ satisfying $\ell < {\tt r}$, and that $(\varphi_1,\dots,\varphi_m)$ is the trigonometric basis:
\begin{eqnarray*}
 \varphi_1(x) & := & \sqrt{\frac{1}{{\tt r} -\ell}}
 \mathbf 1_I(x),\\
 \varphi_{2j + 1}(x) & := & \sqrt{\frac{2}{{\tt r} -\ell}}
 \sin\left(2\pi j\frac{x -\ell}{{\tt r} -\ell}\right)
 \mathbf 1_I(x)\quad {\rm and}\\
 \varphi_{2j}(x) & := & \sqrt{\frac{2}{{\tt r} -\ell}}\cos\left(2\pi j\frac{x -\ell}{{\tt r} -\ell}\right)
 \mathbf 1_I(x)
\end{eqnarray*}
for every $x\in I$ and $j\in\mathbb N^*$ satisfying $2j + 1\leqslant m$. Let us show that the trigonometric basis fulfills Assumption \ref{assumption_LRI}. On the one hand,
\begin{displaymath}
\mathfrak R(m) =
\underbrace{\frac{4\pi^2}{({\tt r} -\ell)^3}}_{=:\mathfrak c_1}m^3\geqslant
\mathfrak c_1m =\mathfrak c_1\mathfrak L(m)
\end{displaymath}
as established in Comte and Marie \cite{CM21}, Section 3.2.1. On the other hand, assume that for every $j\in\{1,\dots,m\}$, the primitive function $\overline\varphi_j$ of $\varphi_j$ is chosen such that $\overline\varphi_j(x_j) = 0$ with $x_j\in I$. Then,
\begin{eqnarray*}
 |\overline\varphi_j(.)| & = &
 \left|\int_{x_j}^{.}\varphi_j(y)dy\right|\\
 & \leqslant &
 \lambda_I\|\varphi_j\|_{\infty}
 \quad {\rm with}\quad
 \lambda_I =\max(I) -\min(I),
\end{eqnarray*}
leading to $\mathfrak I(m)\leqslant\lambda_{I}^{2}\mathfrak L(m)$.
\end{example}
\noindent
In order to provide an $\mathbb L^2$-error bound on the truncated estimator
\begin{displaymath}
\widetilde b_{m}^{\mathfrak c,\mathfrak l} :=\widetilde b_m\mathbf 1_{\Delta_m}
\end{displaymath}
in Theorem \ref{risk_bound_fixed_point_estimator}, a suitable control of $\mathbb P(\Delta_{m}^{c})$ needs to be established first.
%


%
\begin{lemma}\label{bound_complement_Omega}
If $b_0$ fulfills the dissipativity condition (\ref{dissipativity_condition}), and if $X^1,\dots,X^N$ are independent copies of the stationary solution $X$ of Equation (\ref{main_equation}), then there exists a constant $\mathfrak c_{\ref{bound_complement_Omega},1} > 0$, not depending on $m$, $N$ and $T$, such that
\begin{displaymath}
\mathbb P(\Delta_{m}^{c})
\leqslant
\mathfrak c_{\ref{bound_complement_Omega},1}(
(\mathfrak L(m)^{\frac{1}{2}} +\mathfrak R(m)^{\frac{1}{2}})
(N^{-\frac{1}{2}}\mathfrak I(m)^{\frac{1}{2}}T^{-1} +
\mathfrak R(m)^{\frac{1}{2}}e^{\mathfrak cT}T^{2H - 1}) +
\mathfrak R(m)e^{\mathfrak cT}T^{2H}).
\end{displaymath}
Under Assumption \ref{assumption_LRI}, if $T\in (0,1]$, then there exists a constant $\mathfrak c_{\ref{bound_complement_Omega},2} > 0$, not depending on $m$, $N$ and $T$, such that
\begin{equation}\label{bound_complement_Omega_1}
\mathbb P(\Delta_{m}^{c})
\leqslant\mathfrak c_{\ref{bound_complement_Omega},2}\mathfrak R(m)(
N^{-\frac{1}{2}}T^{-1} + T^{2H - 1}).
\end{equation}
\end{lemma}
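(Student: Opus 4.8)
The plan is to control $\Delta_m^c$ by the union bound $\mathbb P(\Delta_m^c)\leqslant\mathbb P(\Delta_{m,\mathfrak c}^c)+\mathbb P(\Delta_{m,\mathfrak l}^c)$ and to estimate each term by Markov's inequality applied to the supremum over $\mathcal S_{m,\mathfrak c}$ in its definition. The decisive simplification, used throughout, is that every $\varphi\in\mathcal S_{m,\mathfrak c}$ enters $\Phi_m$ and $F_m$ only through the factor $\exp(\int_s^t\varphi'(X_u^i)\,du)$, which is bounded by $e^{\mathfrak cT}$ since $|\int_s^t\varphi'(X_u^i)\,du|\leqslant\mathfrak c|t-s|\leqslant\mathfrak cT$; this lets me pull $\sup_\varphi$ inside and reduce both suprema to $\varphi$-free random quantities. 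The double integrals $\int_0^T\int_0^t|t-s|^{2H-2}\,ds\,dt$ and $\int_0^T\int_0^t|t-s|^{2H-1}\,ds\,dt$ evaluate (using $H>1/2$) to constant multiples of $T^{2H}$ and $T^{2H+1}$ respectively, which after division by $T$ produce the powers $T^{2H-1}$ and $T^{2H}$ seen in the statement.

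For $\mathbb P(\Delta_{m,\mathfrak l}^c)$ I would bound the operator norm by the Hilbert--Schmidt norm, $\|DF_m(\theta^\varphi)\|_{\rm op}^2\leqslant\sum_{j,k}(\partial_kF_m(\theta^\varphi)_j)^2$. Each entry carries an extra factor $\int_s^t\varphi_k'(X_u^i)\,du$, bounded by $\|\varphi_k'\|_\infty(t-s)$, so after using $e^{\mathfrak cT}$ and the $T^{2H+1}$ integral I get the deterministic bound $\sup_\varphi|\partial_kF_m(\theta^\varphi)_j|\lesssim e^{\mathfrak cT}T^{2H}\|\varphi_j'\|_\infty\|\varphi_k'\|_\infty$, hence $\sup_\varphi\|DF_m(\theta^\varphi)\|_{\rm op}\lesssim e^{\mathfrak cT}T^{2H}\mathfrak R(m)$. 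Markov's inequality, $\mathbb P(\Delta_{m,\mathfrak l}^c)\leqslant(\mathfrak m_f\mathfrak l)^{-1}\mathbb E[\sup_\varphi\|DF_m(\theta^\varphi)\|_{\rm op}]$, then yields the term $\mathfrak R(m)e^{\mathfrak cT}T^{2H}$.

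For $\mathbb P(\Delta_{m,\mathfrak c}^c)$, writing $\Phi_m(\varphi)=f^{-1}\sum_jc_j(\varphi)\varphi_j$ with $c_j(\varphi)=\tfrac1{NT}\sum_iI_{i,j}-F_m(\theta^\varphi)_j$ and differentiating the ratio (using that $f=f_0$ is $C^1$ and bounded below by $\mathfrak m_f$ on the compact $I$, so the factors $\mathfrak m_f^{-1}$ and $\|f'/f\|_{\infty,I}$ are $T$-independent constants absorbed into $\mathfrak c_{\ref{bound_complement_Omega},1}$), I get $\|\Phi_m(\varphi)'\|_\infty\lesssim\sum_j|c_j(\varphi)|(\|\varphi_j'\|_\infty+\|\varphi_j\|_\infty)\leqslant(\sum_jc_j(\varphi)^2)^{1/2}(\mathfrak R(m)^{1/2}+\mathfrak L(m)^{1/2})$ by Cauchy--Schwarz. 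The $F_m$-part of $c_j$ is again bounded deterministically by $e^{\mathfrak cT}T^{2H-1}\|\varphi_j'\|_\infty$. The genuinely statistical term is $\tfrac1{NT}\sum_iI_{i,j}$: here I would use the Young chain rule (valid for $H>1/2$) to write $I_{i,j}=\overline\varphi_j(X_T^i)-\overline\varphi_j(X_0^i)$, and then invoke stationarity ($X_T\overset{d}{=}X_0$) to see this empirical mean is \emph{centered}; its variance is at most $4\|\overline\varphi_j\|_\infty^2/(NT^2)$, so $\mathbb E[(\sum_j(\tfrac1{NT}\sum_iI_{i,j})^2)^{1/2}]\leqslant(\sum_j\mathrm{var})^{1/2}\lesssim N^{-1/2}T^{-1}\mathfrak I(m)^{1/2}$. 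Combining and applying Markov with threshold $\mathfrak c$ gives the remaining block $(\mathfrak L(m)^{1/2}+\mathfrak R(m)^{1/2})(N^{-1/2}\mathfrak I(m)^{1/2}T^{-1}+\mathfrak R(m)^{1/2}e^{\mathfrak cT}T^{2H-1})$, and summing the two contributions yields the first asserted inequality. The second one follows by substituting $\mathfrak L(m)\vee\mathfrak I(m)\leqslant\mathfrak c_\varphi\mathfrak R(m)$ from Assumption \ref{assumption_LRI} and using $e^{\mathfrak cT}\leqslant e^{\mathfrak c}$ and $T^{2H}\leqslant T^{2H-1}$ for $T\in(0,1]$.

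The hard part will be the centering step: without stationarity the mean $\tfrac1T\mathbb E[\overline\varphi_j(X_T)-\overline\varphi_j(X_0)]$ would survive as a bias of order $T^{-1}$ that is not damped by $N$, destroying the $N^{-1/2}$ rate; it is precisely the identity $X_T\overset{d}{=}X_0$ that turns this into a pure variance term and lets the primitives $\mathfrak I(m)$, rather than $\mathfrak L(m)$, appear. A secondary technical point is justifying the pointwise differentiation of the ratio $f^{-1}\sum_jc_j\varphi_j$, which relies on the $C^1$-regularity and the lower bound of the stationary density on $I$.
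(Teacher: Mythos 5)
Your proposal is correct and follows essentially the same route as the paper's proof: union bound plus Markov, the uniform bound $e^{\mathfrak cT}$ on the exponential factor to remove the supremum over $\mathcal S_{m,\mathfrak c}$, the Hilbert--Schmidt bound on $\|DF_m\|_{\rm op}$, the Young chain rule $I_{i,j}=\overline\varphi_j(X_T^i)-\overline\varphi_j(X_0^i)$ combined with stationarity to center the empirical term and obtain the $N^{-1/2}T^{-1}\mathfrak I(m)^{1/2}$ variance contribution, and finally Assumption \ref{assumption_LRI} with $T\leqslant 1$. The only differences are cosmetic (you apply Cauchy--Schwarz once to $\sum_j|c_j|(\|\varphi_j\|_\infty+\|\varphi_j'\|_\infty)$ rather than term by term, and package the $f$-constants slightly differently than the paper's $\mathfrak m_{f,f'}$), and your closing remark correctly identifies stationarity-based centering as the decisive step.
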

\noindent
Inequality (\ref{bound_complement_Omega_1}) suggests that, at the end, there will be a double compromise to manage with. First, the usual bias-variance tradeoff between the number $N$ of observations and the dimension $m$ of the projection space, but also a compromise between $N$ and $T$. Indeed, the time horizon $T$ should be chosen near to $0$ in order to minimize $T^{2H - 1}$, but not that much because $N^{-1/2}T^{-1}$ and the variance term in the $\mathbb L^2$-error bound on the auxiliary estimator $\widehat b_m$ (see Proposition \ref{risk_bound_oracle}) explode when $T$ goes to $0$. See the final remarks for details. Finally, note that if $X^1,\dots,X^N$ have been observed on $[0,T_{\max}]$ for a fixed $T_{\max} > 0$, then our estimator is calculable for $T = T(N)$ with $T(N)\downarrow 0$ when $N\rightarrow\infty$.
%


%
\begin{proof}
The proof of Lemma \ref{bound_complement_Omega} is dissected in three steps. Step 1 deals with a bound on $\mathbb P(\Delta_{m,\mathfrak c}^{c})$, Step 2 with a bound on $\mathbb P(\Delta_{m,\mathfrak l}^{c})$, and the conclusion comes in Step 3.
\\
\\
{\bf Step 1.} Consider $\varphi\in\mathcal S_{m,\mathfrak c}$. Since $f = f_0\in C^1(\mathbb R)$ and $f'$ is bounded by Li et al. \cite{LPS23}, Theorem 1.1, for every $x\in\mathbb R$,
\begin{eqnarray*}
 |\Phi_m(\varphi)'(x)|
 & \leqslant &
 \left|\frac{f'(x)}{f(x)^2}\right|\sum_{j = 1}^{m}|\varphi_j(x)|U_j(\varphi) +
 \left|\frac{1}{f(x)}\right|\sum_{j = 1}^{m}|\varphi_j'(x)|U_j(\varphi)\\
 & \leqslant &
 \underbrace{\left(\frac{\|f'\|_{\infty}}{\mathfrak m_{f}^{2}}\vee
 \frac{1}{\mathfrak m_f}\right)}_{=:\mathfrak m_{f,f'}}
 \sum_{j = 1}^{m}(|\varphi_j(x)| + |\varphi_j'(x)|)U_j(\varphi)
\end{eqnarray*}
where, for every $j\in\{1,\dots,m\}$,
\begin{eqnarray*}
 U_j(\varphi) & := &
 \frac{1}{NT}\left|\sum_{i = 1}^{N}
 \left[I_{i,j} -\mathfrak a\int_{0}^{T}\int_{0}^{t}\varphi_j'(X_{t}^{i})
 \exp\left(\int_{s}^{t}\varphi'(X_{u}^{i})du\right)|t - s|^{2H - 2}dsdt\right]\right|\\
 & \leqslant &
 \left|\frac{1}{NT}\sum_{i = 1}^{N}\int_{0}^{T}\varphi_j(X_{s}^{i})dX_{s}^{i}\right| +
 \frac{\mathfrak a}{NT}\|\varphi_j'\|_{\infty}e^{\mathfrak cT}
 \left(N\int_{0}^{T}\int_{0}^{t}|t - s|^{2H - 2}dsdt\right)\\
 & = &
 \left|\frac{1}{NT}\sum_{i = 1}^{N}
 (\overline\varphi_j(X_{T}^{i}) -\overline\varphi_j(X_{0}^{i}))\right| +
 \frac{\mathfrak a}{2H(2H - 1)}\|\varphi_j'\|_{\infty}e^{\mathfrak cT}T^{2H - 1}
\end{eqnarray*}
by the change of variable formula for Young's integral. So,
\begin{eqnarray*}
 \|\Phi_m(\varphi)'\|_{\infty} & \leqslant &
 \mathfrak m_{f,f'}\sum_{j = 1}^{m}(\|\varphi_j\|_{\infty} +\|\varphi_j'\|_{\infty})\\
 & &
 \hspace{2cm}\times\left(
 \left|\frac{1}{NT}\sum_{i = 1}^{N}
 (\overline\varphi_j(X_{T}^{i}) -\overline\varphi_j(X_{0}^{i}))\right| +
 \frac{\mathfrak a}{2H(2H - 1)}\|\varphi_j'\|_{\infty}e^{\mathfrak cT}T^{2H - 1}
 \right) =: V_m.
\end{eqnarray*}
Moreover, since $X$ is a stationary process, $f = f_0 = f_T$, and then
\begin{eqnarray*}
 \mathbb E\left(\left|\frac{1}{NT}\sum_{i = 1}^{N}
 (\overline\varphi_j(X_{T}^{i}) -\overline\varphi_j(X_{0}^{i}))\right|\right)
 & \leqslant &
 {\rm var}\left(\frac{1}{NT}\sum_{i = 1}^{N}
 (\overline\varphi_j(X_{T}^{i}) -\overline\varphi_j(X_{0}^{i}))\right)^{\frac{1}{2}}\\
 & \leqslant &
 \frac{1}{N^{1/2}T}\mathbb E((\overline\varphi_j(X_T) -\overline\varphi_j(X_0))^2)^{\frac{1}{2}}
 \leqslant\frac{2\|\overline\varphi_j\|_{\infty}}{N^{1/2}T}
\end{eqnarray*}
for every $j\in\{1,\dots,m\}$. This leads to
\begin{eqnarray*}
 \mathbb E\left(
 \sum_{j = 1}^{m}(\|\varphi_j\|_{\infty} +\|\varphi_j'\|_{\infty})
 \left|\frac{1}{NT}\sum_{i = 1}^{N}
 (\overline\varphi_j(X_{T}^{i}) -\overline\varphi_j(X_{0}^{i}))\right|\right)
 & \leqslant &
 \frac{2}{N^{1/2}T}
 \sum_{j = 1}^{m}(\|\varphi_j\|_{\infty} +\|\varphi_j'\|_{\infty})\|\overline\varphi_j\|_{\infty}\\
 & \leqslant &
 \frac{2}{N^{1/2}T}(\mathfrak L(m)^{\frac{1}{2}} +
 \mathfrak R(m)^{\frac{1}{2}})\mathfrak I(m)^{\frac{1}{2}}.
\end{eqnarray*}
Therefore, by Markov's inequality,
\begin{eqnarray*}
 \mathbb P(\Delta_{m,\mathfrak c}^{c})
 & \leqslant &
 \frac{\mathbb E(|V_m|)}{\mathfrak c}\\
 & \leqslant &
 \frac{\mathfrak m_{f,f'}}{\mathfrak c}
 (\mathfrak L(m)^{\frac{1}{2}} +\mathfrak R(m)^{\frac{1}{2}})
 \left(
 2N^{-\frac{1}{2}}\mathfrak I(m)^{\frac{1}{2}}T^{-1} +
 \frac{\mathfrak a}{2H(2H - 1)}
 \mathfrak R(m)^{\frac{1}{2}}e^{\mathfrak cT}T^{2H - 1}
 \right).
\end{eqnarray*}
{\bf Step 2.} For every $\theta\in\mathbb R^m$,
\begin{eqnarray*}
 \|DF_m(\theta)\|_{\rm op}^{2} & = &
 \left(\sup_{z:\|z\|_m = 1}
 \|DF_m(\theta).z\|_m\right)^2\\
 & = &
 \sup_{z:\|z\|_m = 1}
 \sum_{j = 1}^{m}\left(\sum_{\ell = 1}^{m}[\nabla F_m(\theta)_j]_{\ell}z_{\ell}\right)^2
 \leqslant
 \sum_{j,\ell = 1}^{m}[\nabla F_m(\theta)_j]_{\ell}^{2}.
\end{eqnarray*}
Then, for every $\varphi\in\mathcal S_{m,\mathfrak c}$,
\begin{eqnarray*}
 \|DF_m(\theta^{\varphi})\|_{\rm op}^{2}
 & \leqslant &
 \sum_{j,\ell = 1}^{m}\left[
 \frac{\mathfrak a}{NT}\sum_{i = 1}^{N}\int_{0}^{T}\int_{0}^{t}
 \varphi_j'(X_{t}^{i})\exp\left(
 \int_{s}^{t}\varphi'(X_{u}^{i})du\right)
 \left(\int_{s}^{t}\varphi_{\ell}'(X_{u}^{i})du\right)|t - s|^{2H - 2}dsdt
 \right]^2\\
 & \leqslant &
 \frac{\mathfrak a^2}{N^2T^2}\mathfrak R(m)^2e^{2\mathfrak cT}
 \left[N\int_{0}^{T}\int_{0}^{t}|t - s|^{2H - 1}dsdt\right]^2
 =\left(\frac{\mathfrak a}{2H(2H + 1)}\right)^2
 \mathfrak R(m)^2e^{2\mathfrak cT}T^{4H}.
\end{eqnarray*}
So, by Markov's inequality,
\begin{displaymath}
\mathbb P(\Delta_{m,\mathfrak l}^{c})
\leqslant
\frac{1}{\mathfrak m_f\mathfrak l}
\mathbb E\left[\left(\sup_{\varphi\in\mathcal S_{m,\mathfrak c}}
\|DF_m(\theta^{\varphi})\|_{\rm op}^{2}\right)^{\frac{1}{2}}\right]
\leqslant
\frac{1}{\mathfrak m_f\mathfrak l}
\cdot\frac{\mathfrak a}{2H(2H + 1)}
\mathfrak R(m)e^{\mathfrak cT}T^{2H}.
\end{displaymath}
{\bf Step 3 (conclusion).} By the two previous steps, there exists a constant $\mathfrak c_1 > 0$, not depending on $m$, $N$ and $T$ (because $f = f_0$), such that
\begin{eqnarray*}
 \mathbb P(\Delta_{m}^{c})
 & \leqslant &
 \mathbb P(\Delta_{m,\mathfrak c}^{c}) +
 \mathbb P(\Delta_{m,\mathfrak l}^{c})\\
 & \leqslant &
 \mathfrak c_1(
 (\mathfrak L(m)^{\frac{1}{2}} +\mathfrak R(m)^{\frac{1}{2}})
 (N^{-\frac{1}{2}}\mathfrak I(m)^{\frac{1}{2}}T^{-1} +
 \mathfrak R(m)^{\frac{1}{2}}e^{\mathfrak cT}T^{2H - 1}) +
 \mathfrak R(m)e^{\mathfrak cT}T^{2H}).
\end{eqnarray*}
Moreover, by Assumption \ref{assumption_LRI}, if $T\in (0,1]$, then there exists a constant $\mathfrak c_2 > 0$, not depending on $m$, $N$ and $T$, such that
\begin{displaymath}
\mathbb P(\Delta_{m}^{c})
\leqslant\mathfrak c_2\mathfrak R(m)(
N^{-\frac{1}{2}}T^{-1} + T^{2H - 1}).
\end{displaymath}
\end{proof}
%


%
\begin{theorem}\label{risk_bound_fixed_point_estimator}
Under Assumption \ref{assumption_LRI}, if $T\in (0,1]$, $b_0$ fulfills the dissipativity condition (\ref{dissipativity_condition}), and if $X^1,\dots,X^N$ are independent copies of the stationary solution $X$ of Equation (\ref{main_equation}), then there exists a constant $\mathfrak c_{\ref{risk_bound_fixed_point_estimator}} > 0$, not depending on $m$, $N$ and $T$, such that
\begin{displaymath}
\mathbb E(\|\widetilde b_{m}^{\mathfrak c,\mathfrak l} - b_0\|_{f^2}^{2})
\leqslant
2\|b_m - b_0\|_{f^2}^{2} +
\mathfrak c_{\ref{risk_bound_fixed_point_estimator}}
\mathfrak R(m)V(N,T),
\end{displaymath}
where ($f = f_0$ doesn't depend on $T$, and)
\begin{displaymath}
V(N,T) := N^{-\frac{1}{2}}T^{-1} + T^{2H - 1}.
\end{displaymath}
\end{theorem}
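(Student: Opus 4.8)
The plan is to split the risk according to whether the event $\Delta_m$ occurs, and to show that on $\Delta_m$ the fixed point $\widetilde b_m$ deviates from the auxiliary estimator $\widehat b_m=\Phi_m(b_0)$ only through the Malliavin correction term, i.e. by a quantity of pure variance order; the bias will then be inherited from the oracle bound of Proposition~\ref{risk_bound_oracle}, up to the factor $2$ appearing in the statement. Since $\widetilde b_{m}^{\mathfrak c,\mathfrak l}=\widetilde b_m\mathbf 1_{\Delta_m}$ vanishes on $\Delta_{m}^{c}$, I would first write
\begin{displaymath}
\mathbb E(\|\widetilde b_{m}^{\mathfrak c,\mathfrak l}-b_0\|_{f^2}^{2})
=\mathbb E(\mathbf 1_{\Delta_m}\|\widetilde b_m-b_0\|_{f^2}^{2})
+\mathbb P(\Delta_{m}^{c})\|b_0\|_{f^2}^{2}.
\end{displaymath}
The last summand is handled directly by Lemma~\ref{bound_complement_Omega}: under Assumption~\ref{assumption_LRI} and for $T\in(0,1]$, $\mathbb P(\Delta_{m}^{c})\leqslant\mathfrak c_{\ref{bound_complement_Omega},2}\mathfrak R(m)V(N,T)$, while $\|b_0\|_{f^2}^{2}=\int_Ib_0(x)^2f(x)^2dx$ is a finite constant (because $I$ is compact and $f=f_0$ is bounded) that does not depend on $m$, $N$ or $T$. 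This term is thus already of the announced order.

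For the remaining term, on $\Delta_m$ I would use $\|\widetilde b_m-b_0\|_{f^2}^{2}\leqslant 2\|\widetilde b_m-\widehat b_m\|_{f^2}^{2}+2\|\widehat b_m-b_0\|_{f^2}^{2}$ and bound the expectation of the second summand by Proposition~\ref{risk_bound_oracle}, which furnishes exactly the bias $\|b_m-b_0\|_{f^2}^{2}$ (hence the coefficient $2$) plus the variance $\tfrac{2}{N}(\|b_0\|_{f}^{2}\mathfrak L(m)+\mathfrak c_{\ref{bound_variance_Skorokhod}}\sigma^2\overline{\mathfrak m}_TT^{2H-2}(\mathfrak L(m)+\mathfrak R(m)))$. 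Since $b_0$ is dissipative, $\sup b_0'<0$ and $\overline{\mathfrak m}_T$ is a constant; invoking Assumption~\ref{assumption_LRI} (so $\mathfrak L(m)\leqslant\mathfrak c_{\varphi}\mathfrak R(m)$) together with the elementary bounds $N^{-1}\leqslant N^{-1/2}T^{-1}$ and $N^{-1}T^{2H-2}\leqslant N^{-1/2}T^{-1}$ valid for $N\geqslant 1$ and $T\in(0,1]$, this variance is $\leqslant\kappa\mathfrak R(m)V(N,T)$ for a constant $\kappa$ not depending on $m$, $N$, $T$.

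The key step is to show that $\mathbb E(\mathbf 1_{\Delta_m}\|\widetilde b_m-\widehat b_m\|_{f^2}^{2})$ carries no bias. On $\Delta_m$ one has $\widetilde b_m=\Phi_m(\widetilde b_m)$ and, by Proposition~\ref{Skorokhod_pathwise_relationship}, $\widehat b_m=\Phi_m(b_0)$; the orthonormality computation from the proof of Proposition~\ref{existence_uniqueness_fixed_point} then gives
\begin{displaymath}
\|\widetilde b_m-\widehat b_m\|_{f^2}^{2}
=\|\Phi_m(\widetilde b_m)-\Phi_m(b_0)\|_{f^2}^{2}
=\sum_{j=1}^{m}R_{j}^{2},
\end{displaymath}
where
\begin{displaymath}
R_j=\frac{\mathfrak a}{NT}\sum_{i=1}^{N}\int_{0}^{T}\int_{0}^{t}
\varphi_j'(X_{t}^{i})
\left[\exp\left(\int_{s}^{t}b_0'(X_{u}^{i})du\right)
-\exp\left(\int_{s}^{t}\widetilde b_m'(X_{u}^{i})du\right)\right]
|t-s|^{2H-2}dsdt.
\end{displaymath}
Because $\widetilde b_m\in\mathcal S_{m,\mathfrak c}$ yields $\|\widetilde b_m'\|_{\infty}\leqslant\mathfrak c$ and dissipativity yields $\|b_0'\|_{\infty}<\mathfrak c$, both exponents are $\leqslant\mathfrak cT$ and $\|b_0'-\widetilde b_m'\|_{\infty}\leqslant 2\mathfrak c$; the inequality $|e^a-e^b|\leqslant e^{a\vee b}|a-b|$ and $\int_{0}^{T}\int_{0}^{t}(t-s)^{2H-1}dsdt=T^{2H+1}/(2H(2H+1))$ then give $|R_j|\leqslant\kappa\|\varphi_j'\|_{\infty}e^{\mathfrak cT}T^{2H}$ (one power of $T^{-1}$ cancelling), whence $\|\widetilde b_m-\widehat b_m\|_{f^2}^{2}\leqslant\kappa^2e^{2\mathfrak cT}\mathfrak R(m)T^{4H}$. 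As $e^{2\mathfrak cT}\leqslant e^{2\mathfrak c}$ and $T^{4H}\leqslant T^{2H-1}\leqslant V(N,T)$ for $T\in(0,1]$, this is $\leqslant\kappa'\mathfrak R(m)V(N,T)$, with no bias contribution. Collecting the three estimates then yields the claimed inequality.

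The main obstacle is precisely this last estimate. One must resist applying the contraction property to $\Phi_m(\widetilde b_m)-\Phi_m(b_0)$, which would reintroduce a factor $\|\widehat b_m-b_0\|_{f^2}$ and spoil the clean bias coefficient $2$. The correct observation is that the difference $\widetilde b_m-\widehat b_m$ lives entirely in the Malliavin correction term, so that its size is governed \emph{only} by the a priori derivative bound $\mathfrak c$ (through $\|b_0'-\widetilde b_m'\|_{\infty}\leqslant 2\mathfrak c$), and is therefore of pure variance order $\mathfrak R(m)T^{4H}$, dominated by $\mathfrak R(m)V(N,T)$ once $T\in(0,1]$.
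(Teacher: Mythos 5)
Your proposal is correct and follows essentially the same route as the paper: the same split of the risk on $\Delta_m$ versus $\Delta_m^c$ (handled by Lemma \ref{bound_complement_Omega}), the same use of Proposition \ref{risk_bound_oracle} for the $\|\widehat b_m-b_0\|_{f^2}$ part, and the same key estimate on $\|\Phi_m(\widetilde b_m)-\Phi_m(b_0)\|_{f^2}$ via the cancellation of the $I_{i,j}$ terms and the bound $\|\widetilde b_m'-b_0'\|_{\infty}\leqslant 2\mathfrak c$, yielding the $\mathfrak R(m)e^{2\mathfrak cT}T^{4H}$ contribution dominated by $\mathfrak R(m)V(N,T)$. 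The only cosmetic differences are that the paper packages your $R_j$ computation as a standalone Lipschitz-in-$\|\cdot'\|_{\infty}$ inequality (its Step 1) before specializing to $(\widetilde b_m,b_0)$, and that $\|b_0'\|_{\infty}<\mathfrak c$ comes from the definition of $\mathfrak c$ rather than from dissipativity.
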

%


%
\begin{proof}
The proof of Theorem \ref{risk_bound_fixed_point_estimator} is dissected in two steps. The first one deals with the following regularity result on the map $\Phi_m$:
\begin{equation}\label{risk_bound_fixed_point_estimator_1}
\|\Phi_m(\varphi) -\Phi_m(\overline\varphi)\|_{f^2}
\leqslant
\mathfrak l(m,T)
\|\varphi' -\overline\varphi'\|_{\infty}
\textrm{ $;$ }
\forall\varphi,\psi\in\mathcal S_{\mathfrak c}
\end{equation}
with
\begin{displaymath}
\mathcal S_{\mathfrak c} =\{\varphi\in C^1(\mathbb R) :\|\varphi'\|_{\infty}\leqslant\mathfrak c\}
\supset\mathcal S_{m,\mathfrak c}
\quad {\rm and}\quad
\mathfrak l(m,T) =
\frac{\mathfrak a}{2H(2H + 1)}
\mathfrak R(m)^{\frac{1}{2}}e^{\mathfrak cT}T^{2H}.
\end{displaymath}
Then, Step 2 deals with the $\mathbb L^2$-error bound on $\widetilde b_{m}^{\mathfrak c,\mathfrak l}$.
\\
\\
{\bf Step 1.} For any $\varphi,\overline\varphi\in\mathcal S_{\mathfrak c}$, since $(\varphi_1,\dots,\varphi_m)$ is an orthonormal family of $\mathbb L^2(\mathbb R,dx)$,
\begin{displaymath}
\|\Phi_m(\varphi) -\Phi_m(\overline\varphi)\|_{f^2} =
\|F(\varphi) - F(\overline\varphi)\|_m,
\end{displaymath}
where $F$ is the map from $\mathcal S_{\mathfrak c}$ into $\mathbb R^m$ defined by
\begin{displaymath}
F(\psi) :=\left(
\frac{\mathfrak a}{NT}\sum_{i = 1}^{N}\int_{0}^{T}\int_{0}^{t}
\varphi_j'(X_{t}^{i})\exp\left(\int_{s}^{t}\psi'(X_{u}^{i})du\right)|t - s|^{2H - 2}dsdt
\right)_{j\in\{1,\dots,m\}}
\end{displaymath}
for every $\psi\in\mathcal S_{\mathfrak c}$. Moreover, for every $j\in\{1,\dots,m\}$,
\begin{eqnarray*}
 |F(\varphi)_j - F(\overline\varphi)_j|
 & \leqslant &
 \frac{\mathfrak a}{NT}\sum_{i = 1}^{N}
 \int_{0}^{T}\int_{0}^{t}|\varphi_j'(X_{t}^{i})|\cdot |t - s|^{2H - 2}\\
 & &
 \hspace{2.5cm}\times
 \left|\exp\left(\int_{s}^{t}\varphi'(X_{u}^{i})du\right) -
 \exp\left(\int_{s}^{t}\overline\varphi'(X_{u}^{i})du\right)\right|dsdt\\
 & \leqslant &
 \frac{\mathfrak a}{NT}\|\varphi_j'\|_{\infty}e^{\mathfrak cT}\left(
 N\int_{0}^{T}\int_{0}^{t}|t - s|^{2H - 1}dsdt\right)\|\varphi' -\overline\varphi'\|_{\infty}\\
 & = &
 \frac{\mathfrak a}{2H(2H + 1)}
 \|\varphi_j'\|_{\infty}e^{\mathfrak cT}T^{2H}
 \|\varphi' -\overline\varphi'\|_{\infty}.
\end{eqnarray*}
This leads to Inequality (\ref{risk_bound_fixed_point_estimator_1}):
\begin{displaymath}
\|\Phi_m(\varphi) -\Phi_m(\overline\varphi)\|_{f^2}
\leqslant
\mathfrak c_1
\mathfrak R(m)^{\frac{1}{2}}e^{\mathfrak cT}T^{2H}
\|\varphi' -\overline\varphi'\|_{\infty}
\quad {\rm with}\quad
\mathfrak c_1 =
\frac{\mathfrak a}{2H(2H + 1)}.
\end{displaymath}
{\bf Step 2.} On $\Delta_m$, by Equality (\ref{relation_pLS_drift}) and Inequality (\ref{risk_bound_fixed_point_estimator_1}),
\begin{eqnarray*}
 \|\widetilde b_m - b_0\|_{f^2} & \leqslant &
 \|\Phi_m(\widetilde b_m) -\Phi_m(b_0)\|_{f^2} +
 \|\Phi_m(b_0) - b_0\|_{f^2}\\
 & \leqslant &
 \mathfrak l(m,T)\underbrace{\|\widetilde b_m' - b_0'\|_{\infty}}_{\leqslant 2\mathfrak c} +
 \|\widehat b_m - b_0\|_{f^2}.
\end{eqnarray*}
Moreover, by Proposition \ref{risk_bound_oracle},
\begin{eqnarray*}
 \mathbb E(\|\widehat b_m - b_0\|_{f^2}^{2}) & \leqslant &
 \|b_m - b_0\|_{f^2}^{2} +
 \frac{2}{N}\left(\|b_0\|_{f}^{2}\mathfrak L(m) +
 \mathfrak c_{\ref{bound_variance_Skorokhod}}\sigma^2
 \frac{\overline{\mathfrak m}_T}{T^{2 - 2H}}(\mathfrak L(m) +\mathfrak R(m))\right)\\
 & \leqslant &
 \|b_m - b_0\|_{f^2}^{2} +
 \frac{\mathfrak c_2}{N}(\mathfrak L(m) + T^{2H - 2}\mathfrak R(m))
\end{eqnarray*}
with
\begin{displaymath}
\mathfrak c_2 =
2\max\left\{
\|b_0\|_{f}^{2}\textrm{ $;$ }
\mathfrak c_{\ref{bound_variance_Skorokhod}}
\sigma^2\left[1\vee\left(-\frac{H}{M}\right)^{2H}\right]\right\}.
\end{displaymath}
So,
\begin{eqnarray*}
 \mathbb E(\|\widetilde b_{m}^{\mathfrak c,\mathfrak l} - b_0\|_{f^2}^{2})
 & = &
 \mathbb E(\|\widetilde b_m - b_0\|_{f^2}^{2}\mathbf 1_{\Delta_m}) +
 \|b_0\|_{f^2}^{2}\mathbb P(\Delta_{m}^{c})\\
 & \leqslant &
 8\mathfrak c^2\mathfrak l(m,T)^2
 + 2\|b_m - b_0\|_{f^2}^{2}
 + 2\mathfrak c_2N^{-1}(\mathfrak L(m) + T^{2H - 2}\mathfrak R(m))\\
 & &
 \hspace{5cm} +
 \mathfrak c_{\ref{bound_complement_Omega},2}
 \|b_0\|_{f^2}^{2}\mathfrak R(m)(N^{-\frac{1}{2}}T^{-1} + T^{2H - 1})\\
 & \leqslant &
 2\|b_m - b_0\|_{f^2}^{2} +
 \mathfrak c_3\mathfrak R(m)(N^{-\frac{1}{2}}T^{-1} + T^{2H - 1}),
\end{eqnarray*}
where $\mathfrak c_3$ is a positive constant not depending on $m$, $N$ and $T$ (because $f = f_0$).
\end{proof}
\noindent
{\bf Remarks:}
\begin{itemize}
 \item Consider $T(N) := N^{-1/(4H)}$. Then, for every $t\in (0,1]$,
 \begin{displaymath}
 V(N,T(N)) =
 2N^{-\frac{2H - 1}{4H}}\leqslant V(N,t),
 \end{displaymath}
 and by Theorem \ref{risk_bound_fixed_point_estimator} with $T = T(N)$,
 \begin{displaymath}
 \mathbb E(\|\widetilde b_{m}^{\mathfrak c,\mathfrak l} - b_0\|_{f^2}^{2})
 \leqslant
 2\|b_m - b_0\|_{f^2}^{2} +
 2\mathfrak c_{\ref{risk_bound_fixed_point_estimator}}\mathfrak R(m)N^{-\frac{2H - 1}{4H}}.
 \end{displaymath}
 \item Assume that $(\varphi_1,\dots,\varphi_m)$ is the trigonometric basis defined in Example \ref{example_assumption_LRI}, and that $(b_0f)_{|I}$ belongs to the Fourier-Sobolev space
 \begin{displaymath}
 \mathbb W_{2}^{\beta}(I) :=
 \left\{\varphi : I\rightarrow\mathbb R
 \textrm{ $\beta$ times differentiable on $I$} :
 \int_I\varphi^{(\beta)}(x)^2dx <\infty\right\}
 \end{displaymath}
 with $\beta\in\mathbb N^*$. By DeVore and Lorentz \cite{DL93}, Corollary 2.4 p. 205, there exists a constant $\mathfrak c_{\beta,\ell,\texttt r} > 0$, not depending on $m$, such that
 \begin{displaymath}
 \|(bf)_m - b_0f\|^2\leqslant
 \mathfrak c_{\beta,\ell,\texttt r}m^{-2\beta}.
 \end{displaymath}
 Then,
 \begin{displaymath}
 \mathbb E(\|\widetilde b_{m}^{\mathfrak c,\mathfrak l} - b_0\|_{f^2}^{2})
 \leqslant
 \overline{\mathfrak c}_{\ref{risk_bound_fixed_point_estimator}}
 (m^{-2\beta} + m^3N^{-\frac{2H - 1}{4H}}),
 \end{displaymath}
 where $\overline{\mathfrak c}_{\ref{risk_bound_fixed_point_estimator}} > 0$ is a positive constant not depending on $m$ and $N$. Therefore, the bias-variance tradeoff is reached for
 \begin{displaymath}
 m\asymp
 N^{\frac{2H - 1}{4H(3 + 2\beta)}}.
 \end{displaymath}
 \item Thanks to a well-known consequence of Picard's fixed point theorem, for every $n\in\mathbb N^*$ and $\varphi\in\mathcal S_{m,\mathfrak c}$,
 \begin{displaymath}
 \|\underbrace{(\Phi_m\circ\dots\circ\Phi_m)}_{n\textrm{ times}}(\varphi) -\widetilde b_m\|_{f^2}
 \leqslant
 \frac{\mathfrak l^n}{1 -\mathfrak l}
 \|\Phi_m(\varphi) -\varphi\|_{f^2}
 \quad {\rm on}\quad\Delta_m.
 \end{displaymath}
 Then, the estimator $\widetilde b_{m}^{\mathfrak c,\mathfrak l}$ can be approximated by $\widetilde b_{m,n}^{\mathfrak c,\mathfrak l} :=\widetilde b_{m,n}\mathbf 1_{\Lambda_m}$, where the sequence $(\widetilde b_{m,n})_{n\in\mathbb N}$ is defined by $\widetilde b_{m,0}\in\mathcal S_{m,\mathfrak c}$ and
 \begin{displaymath}
 \widetilde b_{m,n + 1} =
 \Phi_m(\widetilde b_{m,n})
 \textrm{ $;$ }n\in\mathbb N.
 \end{displaymath}
 \item In practice, the density function $f$ is unknown. So, $f$ needs to be estimated, for instance by the projection estimator
\begin{displaymath}
\widehat f_m(x) :=
\sum_{j = 1}^{m}\left(\frac{1}{NT}\sum_{i = 1}^{N}
\int_{0}^{T}\varphi_j(X_{s}^{i})ds\right)\varphi_j(x)
\textrm{ $;$ }x\in I.
\end{displaymath}
Then, for numerical purposes, $(\widetilde b_{m,n})_{n\in\mathbb N}$ should be replaced by the sequence $(\overline b_{m,n})_{n\in\mathbb N}$ such that $\overline b_{m,0}\in\mathcal S_{m,\mathfrak c}$ and
\begin{displaymath}
\overline b_{m,n + 1} =
\widehat\Phi_m(\overline b_{m,n})
\textrm{ $;$ }n\in\mathbb N,
\end{displaymath}
where
\begin{displaymath}
\widehat\Phi_m(.) :=\frac{f}{\widehat f_m}\Phi_m(.).
\end{displaymath}
\end{itemize}
%


%

%
\end{document}